\renewcommand{\P}{\mathbb{P}}
\newcommand{\E}{\mathbb{E}}
\newcommand{\I}{\mathbb{I}}
\newcommand{\iP}{\rm PC}
\newtheorem{theorem}{Theorem}
\newtheorem{lemma}{Lemma}
\newtheorem{definition}{Definition}
\newtheorem{remark}{Remark}
\newtheorem{corollary}{Corollary}
\newtheorem{example}{Example}
\begin{document}

%
\begin{center}

\textbf{ \Large Some Pitman Closeness Properties Pertinent to Symmetric Populations}

\vspace{0.5cm}

{Mohammad Jafari Jozani$^{a,}$\footnote{Corresponding author: m$_{-}$jafari$_{-}$jozani@umanitoba.ca}, N. Balakrishnan$^b$, Katherine F. Davies$^a$}

\vspace{0.5cm}

{\it $^a$ University of Manitoba, Department of Statistics, Winnipeg, Manitoba, \\Canada R3T 2N2 }

{\it $^b$ McMaster University, Department of Mathematics and Statistics, Hamilton, Ontario, \\ Canada L8S 4K1 }
\end{center}

\begin{abstract}
\noindent In this paper, we focus on Pitman closeness probabilities  when the  estimators are symmetrically distributed  about the unknown parameter $\theta$.  We first consider two symmetric  estimators  $\hat{\theta}_1$ and $\hat{\theta}_2$ and obtain necessary and sufficient conditions for $\hat{\theta}_1$ to be Pitman closer to the common median $\theta$ than $\hat{\theta}_2$. We then establish some properties in the context of estimation under Pitman closeness criterion.  We define a Pitman closeness probability which measures the frequency with which an individual order statistic is Pitman closer  to $\theta$ than some symmetric estimator. We show that, for symmetric populations,  the sample median is Pitman closer to the population median than any other symmetrically distributed estimator of $\theta$. Finally, we discuss the use of Pitman closeness  probabilities in the determination of an optimal ranked set sampling scheme (denoted by RSS) for the estimation of the population median when the underlying distribution is symmetric. We show that the best RSS scheme from symmetric populations in the sense of Pitman closeness is the median and randomized median  RSS for the cases of odd and even sample sizes, respectively.

\end{abstract}

\noindent {\bf Keywords}: Pitman closeness, order statistics, symmetric random variables, estimators, sample median, more peaked distribution, ranked set sampling, median ranked set sampling. 
\section{Introduction}

The concept of Pitman's measure of closeness, simply referred to as Pitman Closeness, was introduced  by Pitman (1937). Over the years, it has been a competing criterion in the choice of ``efficient estimators" along with other criteria such as unbiasedness, minimum variance, and minimum mean squared error. As a probability, Pitman closeness measures the frequency with which one estimator is closer to the value of a parameter than another competing estimator within the same class of estimators. More precisely, we have the following definition.
%

\begin{definition}
Let $\hat{\theta}_1$ and $\hat{\theta}_2$ be univariate estimators of a real-valued parameter $\theta$ based on a sample of size $n$. Pitman Closeness (PC) is then defined as
\begin{equation}  \label{eqn:pcprob}
\iP(\hat{\theta}_1,\hat{\theta}_2|\theta, n)=\P_{\theta}\left(|\hat{\theta}_1-\theta|<|\hat{\theta}_2-\theta|\right).
\end{equation}
\end{definition}
Using the PC probability in (\ref{eqn:pcprob}), we can state that the estimator $\hat{\theta}_1$ is Pitman closer to $\theta$  than $\hat{\theta}_2$ if $\iP(\hat{\theta}_1,\hat{\theta}_2|\theta, n) \geq \frac{1}{2}$ for all $\theta$ in the parameter space $\Theta$, with strict inequality holding for at least one $\theta$. For further details on the concept of Pitman closeness and its applications, one may refer to Keating et al.  (1993).



Recently, considerable discussion has taken place on the use of Pitman closeness as a criterion in the context of ordered data as estimators. The basic work in this direction started with Balakrishnan et al. (2009) who established that the sample median is Pitman closest to the population median among all order statistics in a sample.  Subsequently, Pitman closeness has been used in estimating the  population parameters such as  quantiles and median using order statistics, records and censored data. For a list of  most recent works in this direction we refer to Volterman et al.  (2012) and the references cited therein. 

In this paper, we study Pitman closeness for  symmetrically distributed estimators about the same median $\theta$.  We start with two such estimators $\hat{\theta}_1$ and $\hat{\theta}_2$.  Without loss of generality, through out the paper,  we will simply refer to  $\hat{\theta}_1$ and $\hat{\theta}_2$ as symmetrically distributed random variables $X$ and $Y$.   In Section \ref{rss1}, we obtain necessary and sufficient conditions for $X$ to be Pitman closer to $\theta$ than $Y$.  The results are augmented by several examples. In Section \ref{rss2}, we carry out a Pitman closeness comparison between order statistics and symmetric estimators. In this framework, we define a Pitman closeness probability and establish some properties which suggest optimal estimation of the population median in the case of symmetric distributions. In Section \ref{ss1}, we discuss some optimal ranked set sampling schemes for symmetric populations based on Pitman closeness for the estimation of the population median. Finally, in Section \ref{conc},  we  make some concluding remarks.


\section{Pitman Closeness and Symmetric Variables} \label{rss1}

Assume that $X$ and $Y$ are  symmetrically distributed random variables about the same unknown median $\theta\in\Theta$. Suppose that $X\sim F_X(x; \theta)$  and $Y\sim G_Y(y; \theta)$, and that their probability density functions (pdfs) are denoted by $f_X(x; \theta)$ and $g_Y(y; \theta)$, respectively.   Let  $S_X$ and $S_Y$ denote the supports of $X$ and $Y$, respectively.  Without loss of generality, we assume that $\theta=0$, $S_X= (-a, a)$ and $S_Y= (-b, b)$ with $0< a, b\leq \infty$; if not, for example, when $S_X= (r_1, r_2)$, we can define $X'= X-\frac{r_1+r_2}{2}$ with $S_{X'}=(-a, a)$, $a= \frac{r_2-r_1}{2}$ and $\theta'=0$.  When  $\theta=0$, for simplicity, we use $F_X(x)$,  $G_Y(y)$,  $f_X(x)$ and $g_Y(y)$ to denote the corresponding cdfs and pdfs. 

\begin{lemma}\label{lem:pitman-SS}
 Suppose $X$ and $Y$ are independent and symmetrically distributed about $\theta=0$, and that $S_X=(-a, a)$ and $S_Y=(-b, b)$ represent the supports of $X$ and $Y$, respectively, with $0< a , b\leq\infty$. 
 \begin{itemize}
 \item[(i)] When $a<b$, 
 $X$ is Pitman closer to $\theta$ than $Y$ if and only if  
 \begin{eqnarray}\label{ineq-condition}\int_{0}^{a} F_X(t) g_Y(t)\, dt \geq G_Y(a)-\frac{5}{8}; \end{eqnarray}
\item [(ii)]
 When $a\geq b$, $X$ is Pitman closer to $\theta$ than $Y$ if and only if   
  \begin{eqnarray}\label{same-support}\int_{0}^{b} F_X(t) g_Y(t)\, dt \geq \frac{3}{8}. \end{eqnarray}
\end{itemize}
\end{lemma}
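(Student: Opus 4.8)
The plan is to reduce the Pitman closeness statement to a single probability computation and then read off the two inequalities by elementary integration. Since $X$ and $Y$ are symmetric about $\theta=0$, the assertion that $X$ is Pitman closer to $\theta$ than $Y$ is, by Definition~1 and equation (\ref{eqn:pcprob}), exactly the statement $\P(|X|<|Y|)\ge\tfrac12$. Because both variables are absolutely continuous, $\P(|X|=|Y|)=0$, so the strictness of the inequality inside is immaterial. The first step is therefore to obtain a workable expression for $\P(|X|<|Y|)$.

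To that end I would fold each variable to its absolute value. Symmetry about $0$ gives $F_X(-t)=1-F_X(t)$, so for $t\in(0,a)$ the distribution function of $|X|$ is $\P(|X|\le t)=2F_X(t)-1$ and the density of $|Y|$ on $(0,b)$ is $2g_Y(t)$; likewise $G_Y(0)=\tfrac12$ and $G_Y(b)=1$. Conditioning on the value of $|Y|$ then gives $\P(|X|<|Y|)=\int_0^b \P(|X|<t)\,2g_Y(t)\,dt$, and everything reduces to bookkeeping on the supports.

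For part (ii), where $a\ge b$, the inner probability equals $2F_X(t)-1$ for every $t\in(0,b)$, so the integral collapses to $4\int_0^b F_X(t)g_Y(t)\,dt-1$ after using $\int_0^b g_Y(t)\,dt=\tfrac12$; imposing $\ge\tfrac12$ yields (\ref{same-support}) immediately. For part (i), where $a<b$, I would split the range at $t=a$: on $(0,a)$ the inner probability is again $2F_X(t)-1$, while on $(a,b)$ it saturates to $1$ because $|X|<a<t$ almost surely. Summing the two contributions and simplifying with $\int_0^a g_Y(t)\,dt=G_Y(a)-\tfrac12$ and $\int_a^b g_Y(t)\,dt=1-G_Y(a)$ should produce the identity $\P(|X|<|Y|)=4\int_0^a F_X(t)g_Y(t)\,dt-4G_Y(a)+3$; rearranging $\ge\tfrac12$ then gives (\ref{ineq-condition}). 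Since in each case the stated inequality is an exact algebraic reformulation of $\P(|X|<|Y|)\ge\tfrac12$, the ``if and only if'' is automatic and requires no separate argument.

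The only delicate point I anticipate is the support bookkeeping in part (i): one must recognize that the inner probability $\P(|X|<t)$ ceases to involve $F_X$ once $t$ exceeds $a$ and saturates at $1$, and then track the resulting constants $-4G_Y(a)+3$ correctly. Everything else is routine folding of symmetric densities and evaluation of $G_Y$ at $0$ and $b$.
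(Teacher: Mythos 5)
Your proposal is correct and follows essentially the same route as the paper's own proof: both condition on the value of $Y$ (you fold to $|Y|$ first, the paper integrates over positive and negative $t$ and then invokes symmetry), arrive at the identical key identity $\P_0(|X|<|Y|)=4\int_0^a F_X(t)g_Y(t)\,dt-4G_Y(a)+3$ (and its analogue $4\int_0^b F_X(t)g_Y(t)\,dt-1$ when $a\geq b$), and read off the equivalences by rearranging $\geq\tfrac12$. The support bookkeeping you flag as the delicate point is handled exactly as in the paper, by splitting the integral at $t=a$ where $\P(|X|<t)$ saturates at $1$.
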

\begin{proof}  To show $(i)$, consider 
\begin{eqnarray*}
\P_0(|X| <|Y|)&=& \P_0(X-Y<0, X+Y>0) +\P(X-Y>0 , X+Y<0)\\&=& \int_{0}^{b} \{ F_X(t) - F_X(-t)\} g_Y(t)\, dt+ \int_{-b}^{0}\{F_X(-t) - F_X(t) \} g_Y(t)\,dy\\&=& 2 \int_{0}^{b}\{F_X(t)- F_X(-t) \} \,g_Y(t)\, dt\\&=& 2\int_{0}^a \{F_X(t) - (1-F_X(t))\} g_Y(t)\, dt + 2\int_{a}^{b}  \{ 1-0\} g_Y(t)\, dt   \\&=&2 \int_{0}^{a} \{ 2 F_X(t)-1 \}\,g_Y(t)\, dt + 2 \left\{G_Y(b)- G_Y(a) \right\}\\&=& 4\int_{0}^{a} F_X(t) g_Y(t)\, dt - 4G_Y(a) +3.
\end{eqnarray*}
Thus, $\P_0(|X| <|Y|) \geq \frac12$ if and only if $\int_{0}^{a} F_X(t) g_Y(t)\, dt \geq G_Y(a)-\frac{5}{8}$.  

To show $(ii)$, we have 
\begin{eqnarray*}
\P_0(|X| <|Y|)&=&  2 \int_{0}^{b}\{F_X(t)- F_X(-t) \} \,g_Y(t)\, dt\\&=& 4\int_{0}^{b} F_X(t) g_Y(t)\, dt - 1,
\end{eqnarray*}
and so  $\P_0(|X| <|Y|) \geq \frac12$ if and only if  $\int_{0}^{b} F_X(t) g_Y(t)\, dt \geq \frac{3}{8}$.
\end{proof}

\vspace{0.5cm}

We can also present the following equivalent necessary and sufficient conditions  for $X$ to be Pitman closer to $\theta$ than $Y$, which in some cases are more convenient to work with when  compared to  \eqref{ineq-condition} and \eqref{same-support}  (as in the case of Example \ref{beta-generated} below). 

\begin{corollary}\label{cor-equivalent} 
Assume the conditions of Lemma \ref{lem:pitman-SS} to be true. \begin{itemize} \item [(i)] When $a>b$, 
$X$ is Pitman closer  to $\theta$ than $Y$ if and only if \begin{eqnarray}\int_{0}^{b} G_Y(t) f_X(t)\, dt \leq F_X(b)-\frac{5}{8};\end{eqnarray}

\item[(ii)] When $a\leq b$,  $X$ is Pitman closer to $\theta$ than $Y$ if and only if 
 \begin{eqnarray}\int_{0}^{a} G_Y(t) f_X(t)\, dt \leq \frac{3}{8}.\end{eqnarray} 
\end{itemize}
\end{corollary}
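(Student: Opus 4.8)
The plan is to reduce Corollary~\ref{cor-equivalent} to Lemma~\ref{lem:pitman-SS} by exploiting the complementary relationship between the two Pitman closeness probabilities. Because $X$ and $Y$ are independent and absolutely continuous, the event $\{|X|=|Y|\}$ has probability zero, so
\[
\P_0(|X|<|Y|) = 1 - \P_0(|Y|<|X|).
\]
Hence ``$X$ is Pitman closer to $\theta$ than $Y$'', i.e.\ $\P_0(|X|<|Y|)\ge\tfrac12$, is equivalent to $\P_0(|Y|<|X|)\le\tfrac12$. I would therefore apply Lemma~\ref{lem:pitman-SS} with the roles of $X$ and $Y$ (and correspondingly of $a$ and $b$) interchanged so as to obtain an exact expression for $\P_0(|Y|<|X|)$, and then rearrange the resulting inequality.

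For part (i), when $a>b$ the smaller support in the interchanged problem is $(-b,b)$, so the interchanged form of the computation in the proof of Lemma~\ref{lem:pitman-SS}(i) gives
\[
\P_0(|Y|<|X|) = 4\int_0^b G_Y(t) f_X(t)\, dt - 4F_X(b) + 3.
\]
Substituting this into $\P_0(|X|<|Y|)=1-\P_0(|Y|<|X|)\ge\tfrac12$ and simplifying yields $\int_0^b G_Y(t) f_X(t)\, dt \le F_X(b)-\tfrac58$. For part (ii), when $a\le b$ the interchanged problem falls under Lemma~\ref{lem:pitman-SS}(ii), so $\P_0(|Y|<|X|)=4\int_0^a G_Y(t)f_X(t)\,dt-1$, and the same rearrangement produces $\int_0^a G_Y(t)f_X(t)\,dt\le\tfrac38$.

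Alternatively, I could derive both conditions directly from those already appearing in Lemma~\ref{lem:pitman-SS} by integrating $\int F_X\,g_Y$ by parts and invoking the symmetry facts $F_X(0)=G_Y(0)=\tfrac12$ together with $F_X(a)=G_Y(b)=1$ (each cdf attains $1$ at the right endpoint of its support); the reversal of the inequalities then arises transparently from the sign change in $\int F_X\,g_Y=\text{const}-\int G_Y\,f_X$. There is no genuine analytic obstacle here: the integrands and endpoint values are all elementary, and the only point requiring care is the bookkeeping of these boundary constants together with the consistent treatment of the case $a=b$, which Lemma~\ref{lem:pitman-SS} places in part (ii) and which must likewise land in Corollary~\ref{cor-equivalent}(ii), both conditions then collapsing to $\int_0^a G_Y\,f_X\,dt\le\tfrac38$ since $F_X(a)=1$ there.
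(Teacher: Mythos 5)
Your proposal is correct, but it takes a different route from the paper. The paper's own ``proof'' is a one-line remark that the argument is similar to that of Lemma~\ref{lem:pitman-SS}, i.e.\ the intended proof is a direct mirror-image computation of $\P_0(|X|<|Y|)$ obtained by conditioning on the variable with the larger support and splitting the integral at the smaller support's endpoint. You instead derive the corollary \emph{formally from the lemma}: since $X$ and $Y$ are independent with densities, $\P_0(|X|=|Y|)=0$, so $\P_0(|X|<|Y|)=1-\P_0(|Y|<|X|)$, and the exact affine expressions for $\P_0(|Y|<|X|)$ from the lemma's proof (with the roles of $X,Y$ and $a,b$ interchanged) convert the threshold $\tfrac12$ into exactly the stated reversed inequalities; your arithmetic checks out in both cases, including the placement of $a=b$ in part (ii). Two points of care that you handled correctly and that are worth keeping explicit: you must use the affine identities from the lemma's \emph{proof} (e.g.\ $\P_0(|Y|<|X|)=4\int_0^b G_Yf_X\,dt-4F_X(b)+3$), not merely the lemma's if-and-only-if statements, since an ``iff at the threshold $\tfrac12$'' alone does not transfer to the complementary inequality without the monotone/affine dependence on the integral; and the tie-probability-zero step genuinely requires absolute continuity, which the setting provides. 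Your alternative integration-by-parts route (using $F_X(0)=G_Y(0)=\tfrac12$, $F_X(a)=G_Y(b)=1$, so that $\int_0^b F_Xg_Y\,dt = F_X(b)-\tfrac14-\int_0^b G_Yf_X\,dt$) is equally valid and arguably the cleanest, as it needs only the lemma's stated conditions. What your approach buys is economy and transparency --- no computation is duplicated, and the reversal of the inequality and the origin of the constants $\tfrac58$, $\tfrac38$ become visible bookkeeping; what the paper's implied approach buys is a self-contained verification independent of the lemma's intermediate formulas.
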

\begin{proof}
The proof is similar to that of Lemma \ref{lem:pitman-SS} and is therefore omitted for brevity.
\end{proof}

\begin{example}
Let $X\sim U(\theta-a, \theta+a)$ and $Y\sim N(\theta, 1)$, $\theta\in \mathbb{R}$, and let $\phi(\cdot)$ and $\Phi(\cdot)$ denote the pdf and cdf of  $Y-\theta$, respectively.   It can be shown that the random variable $X$ is Pitman closer to $\theta$ than $Y$ when $a\leq a_0\simeq1.47$. To this end,  using Lemma \ref{lem:pitman-SS},  we have $X$ to be Pitman closer to $\theta$  than $Y$ if and only if $$ \int_0^a \frac{(t+a)}{2a} \phi(t)\, dt \geq \Phi(a)-\frac{5}{8}, $$
or   $$\frac {1}{2a} \left\{\frac{1}{\sqrt{2\pi}}-\phi(a) \right\} + \frac 12 \left\{ \Phi(a) - \frac 12 \right\}\geq  \Phi(a)-\frac 58, $$
which is equivalent to $h(a) \leq 0$, where $h(a) =a(\Phi(a) -\frac 34) +  \phi(a)-\phi(0)$. It is easy to check that $h(0)=0$, $\lim_{a\to \infty}h(a)= \infty$,  and also  $$h'(a)=\frac{d}{da}h(a)= \Phi(a)- \frac 34\quad\ \mbox{with} \quad h''(a)= \frac{d^2}{da^2}h(a)= \phi(a)>0. $$
Hence, $h(a)$ is a convex function of $a>0$, and so $h(a)\leq 0$ for all $a\leq a_0\simeq1.47$, where $a_0$ is obtained numerically such that $h(a_0)=0$.  Also, for any $a\geq a_0$, $Y$ is Pitman closer to $\theta$ than $X$.

\end{example}


In the following  examples, we introduce classes of random variables and study Pitman closeness among the members of the families when the parent distribution is symmetrically distributed about the population median $\theta$.

\begin{example} \label{beta-generated}   Consider the class $\mathcal{C}_{\alpha}=\{ X_{\alpha}: \alpha\geq 0\}$ of random variables $X_{\alpha}$ having   pdf 
\begin{eqnarray}\label{falpha}
f_{X_{\alpha}}(x; \theta)= \frac{1}{B(\alpha+1, \alpha+1)} f_X(x; \theta) [F_X(x; \theta)]^{\alpha} [1-F_X(x; \theta)]^{\alpha},\end{eqnarray}
where $B(r, s)=\frac{\Gamma(r) \Gamma(s)}{\Gamma(r+s)}$ ($r, s>0$) is the complete beta function, and   $F_X(x; \theta)$  is  the ``parent distribution" of the family. Note that the class $\mathcal{C}_{\alpha}$  is a subclass of the general class of beta-generated distributions introduced by Jones (2004). It can be easily shown that if the parent distribution is symmetric about $\theta$, then  $X_{\alpha}\in\mathcal{C}_{\alpha}$ is also symmetrically distributed about $\theta$. Now, we show that  any $X_{\alpha}\in\mathcal{C}_{\alpha}$ ($\alpha>0$) is Pitman closer to $\theta$ than $X_0 \equiv X$. To this end,  without loss of generality, let us take $\theta=0$. We need to show that $\pi_{\alpha}= \P_0(|X_{\alpha}| <|X|) \geq \frac 12$ for all $\alpha>0$. Since $X$ and $X_{\alpha}$ have the same support,  using Part $(ii)$ of Corollary \ref{cor-equivalent}, we only need to show that    $\int_{0}^{\infty} F_X(x) f_{X_{\alpha}}(x) \, dx \leq \frac{3}{8}.$ For this purpose, let us consider
\begin{eqnarray*}
\int_{0}^{\infty}  F_X(x) f_{X_{\alpha}}(x)\, dx&=& \frac{\Gamma(2\alpha+2)} {\Gamma^2(\alpha+1)}\int_0^{\infty} f_X(x) [F_X(x)]^{\alpha+1} [1-F_X(x)]^{\alpha}\, dx\\&=& \frac{1}{2} \int_{\frac12}^{1} \frac{1}{B(\alpha+2, \alpha+1)} t^{\alpha+1} (1-t)^{\alpha}\, dt\\&:=&H(\alpha).
\end{eqnarray*}
 Now, the result follows from the fact that $H(\alpha)$ is a decreasing function of $\alpha$ (for $\alpha\geq 0$), with $H(0)=\frac{1}{2 B(2, 1)} \int_{\frac12}^1 t\, dt= \frac 38 $. Also,  $$\pi_{\alpha}= 2- 4 H(\alpha)  $$
is evidently an increasing function of $\alpha$. Consider the special case of odd sample size, say $n=2m-1$. In this case, the established result reveals that the sample median $X_{m:n}$ is always Pitman closer to $\theta$ than every other  sample observation and that the Pitman closeness probability of  $X_{m:n}$ being  closer to $\theta$ than $X$ is an increasing function of $m$, a result established earlier by Balakrishnan et al. (2009).
\end{example}

\begin{remark}
A well-known family of distributions that fits in the framework of Example \ref{beta-generated} is the Type-III generalized logistic family of distributions; see Balakrishnan  (1992). We then have the result that the Type-III logistic random variable $X_{\alpha}$ is Pitman closer to the population median $\theta$ than the logistic random variable $X$, and that the Pitman closeness probability increases with $\alpha$.
\end{remark}

\begin{example}\label{extended-falpha}
 One can easily extend the result in Example \ref{beta-generated} to a more general class of random variables  $\mathcal{C}_{\boldsymbol{\alpha}}= \{ X_{\boldsymbol{\alpha}}:  \boldsymbol{\alpha}=(\alpha_1, \ldots, \alpha_k), \alpha_i>0, i\in{1, \ldots, k}\}, $ where $X_{\boldsymbol{\alpha}}$ has the following mixture distribution:
 $$f_{X_{\boldsymbol{\alpha}}}(x; \theta)=  \sum_{i=1}^k  \pi_i\, f_{X_{\alpha_i}}(x; \theta), $$
where $f_{X_{\alpha_i}}(x; \theta)$ is as defined in \eqref{falpha}. Now, as in Example \ref{beta-generated},  it can be shown that each member of the class $\mathcal{C}_{\boldsymbol{\alpha}}$ is Pitman closer to $\theta$ than $X_{\textbf 0}$.  
\end{example}

 
\begin{example}
As in Example \ref{beta-generated},  let us consider the class of random variables $X_{\alpha}$, but with $\alpha \in (-1,  0]$; i.e., $\mathcal{C}^*_{\alpha}=\{X_{\alpha}: \alpha \in (-1,0]\}$. In this case, we can  easily verify that  the random variable $X_0\equiv X$ is Pitman closest to $\theta$ within the class   $\mathcal{C}^*_{\alpha}$, i.e.,  $X$ is Pitman closer to $\theta$ than any $X_{\alpha}\in\mathcal{C}^*_{\alpha}$. For example, consider the case when $\alpha=-\frac 12$. In this case, we have
$$f_{X_{-\frac 12}}(x; \theta) = \frac{f_{X}(x; \theta)}{\pi \sqrt{F_X(x;\theta) (1-F_X(x; \theta))}},   $$ and 
 $$ \P_0(|X|<|X_{-\frac12}|) = \frac{4}{\pi} \int_{\frac 12}^1 \sqrt{\frac{t}{1-t}} dt-1 = 0.6366 \geq \frac 38,$$ 
which means that $X$ is Pitman closer to $\theta$ than $X_{-\frac12}$. 
\end{example}

\begin{remark}
An extension of this result in the form of Example \ref{extended-falpha} could be presented here as well.
\end{remark}

In the following lemma, we present a sufficient condition for the results in Lemma  \ref{lem:pitman-SS} to  hold.  

\begin{lemma}\label{suff-condition} Suppose $X$ and $Y$ are independent and symmetrically distributed about $\theta$ (which can be taken as 0 without loss of generality). Suppose the supports of $X$ and $Y$ are $S_X=(-a, a)$ and $S_Y=(-b, b)$, with $0<a\leq b\leq \infty$, respectively. Then,  a sufficient condition for $X$ to be Pitman closer to $\theta$ than $Y$ is that  $\P_{\theta}(X-\theta \leq t) \geq \P_{\theta}(Y-\theta \leq t)$ for all $t\geq 0$.
\end{lemma}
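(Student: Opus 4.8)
The plan is to sidestep the two-case form of Lemma~\ref{lem:pitman-SS} and argue directly from the unsplit integral identity that appears inside its proof, since that identity is valid for all supports and avoids comparing against the thresholds $G_Y(a)-\tfrac58$ and $\tfrac38$. Because $\theta$ has already been normalized to $0$, the hypothesis $\P_\theta(X-\theta\le t)\ge \P_\theta(Y-\theta\le t)$ for all $t\ge 0$ reads simply $F_X(t)\ge G_Y(t)$ for every $t\ge 0$. The first step is to recall from the derivation of Lemma~\ref{lem:pitman-SS} the identity
\[
\P_0(|X|<|Y|)=2\int_0^b\{F_X(t)-F_X(-t)\}\,g_Y(t)\,dt,
\]
which uses only the symmetry of $g_Y$ and holds regardless of the relative sizes of $a$ and $b$. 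Applying the symmetry $F_X(-t)=1-F_X(t)$ turns this into $\P_0(|X|<|Y|)=2\int_0^b\{2F_X(t)-1\}g_Y(t)\,dt$.

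Next I would insert the hypothesis. Since $g_Y(t)\ge 0$ on $(0,b)$ and $2F_X(t)-1\ge 2G_Y(t)-1$ for every $t\ge 0$, monotonicity of the integral yields
\[
\P_0(|X|<|Y|)\ \ge\ 2\int_0^b\{2G_Y(t)-1\}\,g_Y(t)\,dt
= 4\int_0^b G_Y(t)g_Y(t)\,dt-2\int_0^b g_Y(t)\,dt .
\]
It then remains only to evaluate this benchmark. The substitution $s=G_Y(t)$ (so $ds=g_Y(t)\,dt$, with $G_Y(0)=\tfrac12$ and $G_Y(b)=1$) gives $\int_0^b G_Y(t)g_Y(t)\,dt=\int_{1/2}^1 s\,ds=\tfrac38$, while $\int_0^b g_Y(t)\,dt=\tfrac12$, so the lower bound equals $4\cdot\tfrac38-2\cdot\tfrac12=\tfrac12$, which is exactly the desired conclusion.

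The only delicate point — and the natural place to check explicitly — is the pointwise comparison $2F_X(t)-1\ge 2G_Y(t)-1$ on all of $(0,b)$, not just where the assumed inequality lives: on $(0,a)$ it is the hypothesis $F_X\ge G_Y$, while on the tail $(a,b)$ (when $a<b$) one has $F_X\equiv 1\ge G_Y$ for free, and this is precisely what lets the argument dispense with the case split. It is worth recording the conceptual content: the hypothesis together with symmetry is equivalent to $|X|$ being stochastically no larger than $|Y|$, i.e. $\P(|X|\le t)\ge\P(|Y|\le t)$ for all $t\ge0$, and the benchmark value $\tfrac12$ is nothing but $\P(|Y_1|<|Y_2|)$ for two independent copies of $Y$, which is $\tfrac12$ by exchangeability. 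Thus the lemma says that stochastically shrinking $|X|$ below this symmetric null benchmark can only push the closeness probability above one half; I would expect the condition to be sufficient but not necessary.
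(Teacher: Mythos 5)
Your proof is correct and takes essentially the same route as the paper's: both start from the integral identity derived in the proof of Lemma \ref{lem:pitman-SS}, substitute the hypothesis $F_X(t)\geq G_Y(t)$ pointwise under the integral, and evaluate $\int G_Y(t)\,g_Y(t)\,dt$ exactly. The only difference is organizational: the paper applies the hypothesis on $(0,a)$ via the split formula $4\int_0^a F_X(t)g_Y(t)\,dt-4G_Y(a)+3$, arriving at the slightly sharper bound $2(G_Y(a)-1)^2+\tfrac12$, whereas you apply it on all of $(0,b)$ (using $F_X\equiv 1\geq G_Y$ on the tail) and land exactly on $\tfrac12$.
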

\begin{proof}
Using the condition that $F_X(t)\geq G_Y(t)$, for all $t\geq 0$, we have
\begin{eqnarray*}
\P_{\theta}(|X-\theta| <|Y-\theta|)&=& \P_0(|X| <|Y|)\\&=&  4\int_{0}^{a} F_X(t) g_Y(t)\, dt -4G_Y(a)+3 \\&\geq& 4\int_{0}^{a} G_Y(t) g_Y(t)\, dt -4G_Y(a)+3\\&=& 4\frac{G^2_Y(t)}{2}\bigg|_0^{a} -4G_Y(a)+3\\&=& 2 (G_Y(a) -1)^2 + \frac 12 \\&\geq&  \frac 12,
\end{eqnarray*}
as required.
\end{proof}

It is of interest to mention that the sufficient condition stated in Lemma \ref{suff-condition} is equivalent to a condition relating to the notion of `peakedness'  of the distributions of $X$ and $Y$ about $\theta$. To this end, let us assume that the condition in Lemma \ref{suff-condition} holds, i.e.,  $\P_{\theta}(\theta-X \leq t) \geq \P_{\theta}(\theta-Y \leq t)$ for all $t\geq 0$. Then, by using the symmetry of  $X$ and $Y$ about $\theta$, we readily have $$\P_{\theta} (X-\theta\leq -t) \leq \P_{\theta}(Y-\theta \leq -t) \quad \text{for all } t\geq 0. $$
Consequently,  we obtain 
\begin{eqnarray*}\P_{\theta}(| X-\theta| \leq t) &=& \P_{\theta}(X-\theta \leq t) - \P_{\theta} (X-\theta\leq -t)\\&\geq& \P_{\theta}(Y-\theta \leq t)-  \P_{\theta}(Y-\theta \leq -t)\\&=& \P_{\theta} (| Y-\theta| \leq t)\end{eqnarray*}
for all $t\geq 0$, meaning that $X$ is more peaked about $\theta$ than $Y$, which leads to the following definition.

\begin{definition}\label{def-peak} Let $X$ and $Y$ be two real-valued random variables. We say that $X$ is more peaked about $\theta$ than  $Y$ if  $\P_{\theta}(|X-\theta|\leq t) \geq \P_{\theta}(|Y-\theta| \leq t)$ for all $t\geq 0$. 
\end{definition}

We then have the  following lemma  which simply states that between two symmetrically distributed random variables about $\theta$, the more peaked random variable is Pitman closer to $\theta$ than the less peaked one. 

\begin{lemma}\label{lem-peaked}
Suppose $X$ and $Y$ are independent and symmetric random variables about $\theta$. Then, if the distribution of $X$ is more peaked about $\theta$ than $Y$ and $S_X\subseteq S_Y$, $X$ is Pitman closer to $\theta$ than $Y$. 
\end{lemma}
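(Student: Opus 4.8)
The plan is to reduce this lemma directly to the sufficient condition already established in Lemma \ref{suff-condition}, using the equivalence between peakedness and the ordering of the two cdfs on the positive half-line that was worked out in the discussion preceding Definition \ref{def-peak}. As usual I would take $\theta=0$ without loss of generality and write $S_X=(-a,a)$ and $S_Y=(-b,b)$.

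First I would translate the peakedness hypothesis into a statement about $F_X$ and $G_Y$. Using the symmetry of $X$ and $Y$ about $0$, one has $F_X(-t)=1-F_X(t)$ and $G_Y(-t)=1-G_Y(t)$, so that for every $t\geq 0$,
$$\P_0(|X|\leq t)=2F_X(t)-1 \quad\text{and}\quad \P_0(|Y|\leq t)=2G_Y(t)-1.$$
Hence the assumption that $X$ is more peaked about $\theta$ than $Y$, namely $\P_0(|X|\leq t)\geq \P_0(|Y|\leq t)$ for all $t\geq 0$, is equivalent to $F_X(t)\geq G_Y(t)$ for all $t\geq 0$. This is precisely the converse of the implication derived in the text preceding Definition \ref{def-peak}, and it is the only piece of bookkeeping in the argument.

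Next I would verify the support condition. The hypothesis $S_X\subseteq S_Y$ yields $a\leq b$, so $0<a\leq b\leq\infty$, which is exactly the configuration assumed in Lemma \ref{suff-condition}. With the cdf ordering $F_X(t)\geq G_Y(t)$ for $t\geq 0$ now in hand, all hypotheses of Lemma \ref{suff-condition} are met, and I would simply invoke that lemma to conclude that $X$ is Pitman closer to $\theta$ than $Y$.

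Since every ingredient is already available, I do not expect a genuine obstacle; the only point requiring care is the direction of the peakedness equivalence. The earlier text shows that $F_X\geq G_Y$ on $[0,\infty)$ implies greater peakedness, whereas here I need the reverse implication, so I would make explicit that for symmetric variables the two statements are genuinely equivalent, both reducing to $F_X(t)\geq G_Y(t)$ for $t\geq 0$. It is also worth noting, although it is not needed for the proof, that peakedness together with symmetry already forces $a\leq b$: if $b<a$, then for any $t\in(b,a)$ one would have $\P_0(|Y|\leq t)=1>\P_0(|X|\leq t)$, contradicting peakedness. Thus the support hypothesis $S_X\subseteq S_Y$ is in fact automatic, and I would remark on this for completeness.
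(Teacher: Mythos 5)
Your proof is correct and follows essentially the same route the paper intends: you use symmetry to show that peakedness is equivalent to the cdf ordering $F_X(t)\geq G_Y(t)$ for all $t\geq 0$, and then invoke Lemma \ref{suff-condition}, which is exactly how the paper (implicitly, via the discussion preceding Definition \ref{def-peak}) justifies this lemma — your version simply makes explicit the converse direction of the equivalence that the paper only asserts. Your added remark that $S_X\subseteq S_Y$ is automatic under the peakedness hypothesis is also correct, though not needed.
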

%

\begin{remark} Consider the case when $X$ and $Y$ are two symmetrically distributed random variables about the same $\theta$ with finite variances.  One can easily show that a necessary condition for $X$ to be more peaked about $\theta$ than $Y$ is that $Var(X)\leq Var(Y)$. To see this, without loss of generality, let us take  $\theta=0$. Now, since $Var(X)= \E(X^2) = 2\int _0^{\infty} t \{1-F_X(t) + F_X(-t)\}dt$ and $F_X(t)- F_X(-t)\geq G_Y(t)-G_Y(-t)$ for all $t>0$, we immediately have $Var(X)=  2\int _0^{\infty} t \{1-F_X(t)+ F_X(-t)\}dt \leq  2\int _0^{\infty} t \{1-G_Y(t)+ G_Y(-t)\}dt= Var(Y)$.   
\end{remark}

We can present similar results for convex combinations of independent symmetric random variables as follows.

\begin{lemma}\label{lem:linear}
Suppose $X$ and $Y$ are independent random variables each symmetrically distributed about $\theta$ with the same support. Then, if the distribution of $X$ is more peaked about $\theta$ than $Y$,   $\omega X + (1-\omega) Y$ ($0\leq \omega <1$) is Pitman closer to $\theta$ than $Y$. 
\end{lemma}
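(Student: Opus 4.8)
The plan is to reduce the claim to a direct evaluation of the Pitman closeness probability, since $Z := \omega X + (1-\omega)Y$ and $Y$ are \emph{not} independent (they share the term $Y$), so neither Lemma~\ref{lem:pitman-SS} nor Lemma~\ref{lem-peaked} can be applied to the pair $(Z,Y)$. As usual I would take $\theta=0$ without loss of generality and let $S_X=S_Y=(-a,a)$. Because $X$ and $Y$ are independent and symmetric about $0$, $Z$ is again symmetric about $0$ and $S_Z=(-a,a)$, so the joint law of $(X,Y)$ is invariant under $(X,Y)\mapsto(-X,-Y)$; this symmetry lets me write $\P_0(|Z|<|Y|)=2\,\P_0(|Z|<|Y|,\,Y>0)$.

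Next I would unfold the event on $\{Y=y>0\}$. There $|Z|<|Y|$ reads $-y<\omega X+(1-\omega)y<y$, which rearranges to $-\tfrac{2-\omega}{\omega}\,y<X<y$. Writing $\lambda:=\tfrac{2-\omega}{\omega}$ (finite and $\ge 1$ whenever $0<\omega\le 1$) and using the symmetry $F_X(-\lambda y)=1-F_X(\lambda y)$, integrating against $g_Y$ gives
\begin{align*}
\P_0(|Z|<|Y|) &= 2\int_0^a\bigl[F_X(y)-F_X(-\lambda y)\bigr]\,g_Y(y)\,dy \\
&= 2\int_0^a\bigl[F_X(y)+F_X(\lambda y)-1\bigr]\,g_Y(y)\,dy \\
&= 2I+2J-1,
\end{align*}
where $I:=\int_0^a F_X(y)g_Y(y)\,dy$, $J:=\int_0^a F_X(\lambda y)g_Y(y)\,dy$, and I have used $\int_0^a g_Y=\tfrac12$. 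Thus the desired inequality $\P_0(|Z|<|Y|)\ge\tfrac12$ is equivalent to the clean statement $I+J\ge\tfrac34$.

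Finally I would close with two monotonicity observations. Since $\lambda\ge1$ and $F_X$ is nondecreasing, $F_X(\lambda y)\ge F_X(y)$ pointwise, so $J\ge I$. And since $X$ is more peaked about $0$ than $Y$, the equivalent inequality $F_X(t)\ge G_Y(t)$ for $t\ge0$ gives $I\ge\int_0^a G_Y(y)g_Y(y)\,dy=\tfrac12\bigl[G_Y(a)^2-G_Y(0)^2\bigr]=\tfrac38$, exactly as in the proof of Lemma~\ref{suff-condition}. Combining, $I+J\ge 2I\ge\tfrac34$, which yields the result.

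The genuinely delicate point — and the only place the hypotheses really bite — is the correct unfolding of the event in the second step: the bounds on $X$ are \emph{asymmetric} ($-\lambda y$ on the left but $y$ on the right), and it is the factor $\lambda=(2-\omega)/\omega\ge 1$ produced there that drives the whole estimate through $F_X(\lambda y)\ge F_X(y)$. I would also flag that the computation needs $\omega>0$ for $\lambda$ to be finite (the borderline $\omega=0$ simply returns $Z=Y$), and that the ``same support'' hypothesis is what guarantees $S_Z\subseteq S_Y$ and keeps the upper limit of integration at $a$ throughout.
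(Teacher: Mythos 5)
Your proof is correct and follows essentially the same route as the paper's: both reduce to $\theta=0$, use the symmetry in $Y$ to write the probability as $2\int_0^\infty\bigl[F_X(t)-F_X\bigl(-\tfrac{2-\omega}{\omega}t\bigr)\bigr]g_Y(t)\,dt$, and then combine $\tfrac{2-\omega}{\omega}\ge 1$ with the more-peakedness inequality $F_X(t)\ge G_Y(t)$ for $t\ge 0$ to bound this below by $\tfrac12$. The only cosmetic difference is that you rewrite $F_X(-\lambda y)=1-F_X(\lambda y)$ and track the two integrals $I,J$, whereas the paper bounds $F_X(-\lambda t)\le F_X(-t)=1-F_X(t)$ directly; your explicit remark that $\omega>0$ is needed (the case $\omega=0$ being degenerate) is a fair point that the paper's statement and proof gloss over.
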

\begin{proof} To show the result note that 
\begin{eqnarray*}
\P_{\theta}(| \omega X+ (1-\omega)Y-\theta| < |Y-\theta|) &=& \P_{0}(| \omega X+ (1-\omega)Y| < |Y|)\\&=&2 \int_{0}^{\infty} \left[ F_X(t)- F_X( -\frac{2-\omega}{\omega} t)\right]  g_Y(t) dt\\&\geq & 2 \int_0^{\infty} [2 F_X(t) -1] g_Y(t) dt\\ &\geq& 2 \int_0^{\infty} [2 G_Y(t) -1] g_Y(t) dt\\&=&\frac12,
\end{eqnarray*}
as required.
\end{proof}

\begin{corollary}\label{cor:linear} Suppose $X_1$ and $X_2$ are i.i.d.\ random variables each symmetrically distributed about $\theta$. Then,  $\omega X_1 + (1-\omega) X_2$ is Pitman closer to $\theta$ than $X_1$ for all $0\leq \omega <1$. In particular $\bar{X}_{2n}$ is always Pitman closer to $\theta$ than $\bar{X}_n$, $n\geq 1$, where $\bar{X}_j=\sum_{i=1}^j X_i$.\end{corollary}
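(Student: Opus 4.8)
The plan is to deduce both statements directly from Lemma \ref{lem:linear}, the only subtlety being the bookkeeping of which variable plays the role of the comparison target. Recall that Lemma \ref{lem:linear} concludes that $\omega X + (1-\omega)Y$ is Pitman closer to $\theta$ than the component $Y$ that carries weight $1-\omega$; so to land on $X_1$ as the target I must arrange for $X_1$ to be that weight-$(1-\omega)$ component.

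For the first assertion, I would first observe that since $X_1$ and $X_2$ are i.i.d.\ and symmetric about $\theta$, they share a common support and each is (trivially, with equality in Definition \ref{def-peak}) more peaked about $\theta$ than the other; hence the hypotheses of Lemma \ref{lem:linear} hold in either labelling. I then apply the lemma with $X \equiv X_2$, $Y \equiv X_1$, and mixing weight $1-\omega$ in place of $\omega$. For $\omega \in (0,1)$ this weight lies in $[0,1)$, and the lemma yields that $(1-\omega)X_2 + \omega X_1 = \omega X_1 + (1-\omega)X_2$ is Pitman closer to $\theta$ than $X_1$, which is exactly the claim. The boundary value $\omega=0$ I would handle separately: there the combination is just $X_2$, and $\P_\theta(|X_2-\theta|<|X_1-\theta|)=\tfrac12$ by exchangeability (using continuity of the common law, so ties have probability zero).

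For the second assertion I would reduce to the first by a blocking argument. Splitting the $2n$ observations into the first and last $n$, write $\bar X_{2n} = \tfrac12\,\bar X_n + \tfrac12\,\bar X_n'$, where $\bar X_n$ and $\bar X_n'$ denote the sample means of the two blocks (reading $\bar X_j$ as the sample mean $\tfrac1j\sum_{i=1}^j X_i$). These two block means are independent, identically distributed, and --- being averages of independent summands each symmetric about $\theta$ --- are themselves symmetric about $\theta$. Applying the first assertion with $\omega=\tfrac12$, with $\bar X_n$ and $\bar X_n'$ playing the roles of $X_1$ and $X_2$, then gives at once that $\bar X_{2n}$ is Pitman closer to $\theta$ than $\bar X_n$.

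I do not expect a genuine obstacle here, since both parts are short corollaries of Lemma \ref{lem:linear}. The one place to be careful is the role assignment just described: a careless application with $X \equiv X_1$, $Y\equiv X_2$ would prove closeness to $X_2$ rather than to $X_1$, and because the combination depends asymmetrically on the two variables when $\omega \neq \tfrac12$, these are not interchangeable statements. Exploiting the i.i.d.\ assumption to swap labels and use the weight $1-\omega$ is precisely what makes the intended target come out correctly. A secondary point worth flagging is the symmetry of the block means in the second part, which holds because an average of independent variables each symmetric about $\theta$ is again symmetric about $\theta$.
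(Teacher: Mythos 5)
Your proof is correct and takes essentially the approach the paper intends: the paper states this corollary without any proof, as an immediate consequence of Lemma \ref{lem:linear}, and that is exactly the deduction you carry out. Your additional care --- swapping roles so that $X_1$ is the weight-$(1-\omega)$ component of the lemma, treating $\omega=0$ separately by exchangeability, and splitting $\bar{X}_{2n}$ into two i.i.d.\ symmetric block means for the second assertion --- simply fills in details the paper leaves implicit.
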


In Corollary \ref{cor:linear}, one can easily show that $\frac{X_1+X_2}{2}$ is the Pitman closest estimator of $\theta$ within the class of estimators $\omega X_1 + (1-\omega) X_2$, $0\leq \omega<1$. To this end, note that 
\begin{eqnarray*}
&& \P_{\theta}\left(\left| \frac{X_1+ X_2}{2}-\theta \right| \leq | \omega X_1 + (1-\omega) X_2 -\theta|\right) \\&=& \P_{0}\left( \left| \frac{X_1+ X_2}{2}\right| \leq | \omega X_1 + (1-\omega) X_2|\right) \\&=& \begin{cases} 2 \int_{0}^{\infty} \left[  F_X(t) - F_X(\frac{2\omega -3}{2\omega+1} t) \right] f_X(t) dt,&  0\leq \omega<\frac 12,  \\  \, \\ 2 \int_{0}^{\infty} \left[ F_X(t) - F_X(\frac{2\omega +1}{2\omega-3} t)\right] f_X(t) dt,&  \frac 12\leq \omega<1,
\end{cases}\\&\geq &2\int_0^{\infty} [ F_X(t) - F_X(-t)] f_X(t) dt\\&=&\frac 12, 
\end{eqnarray*}
which completes the proof. 


Let us now turn our attention to location-scale families and develop some Pitman closeness results.

\begin{lemma}\label{cor-scale}
Consider  a symmetric location-scale family of continuous  distributions 
$$\mathcal{F}=\left\{  F(x; \theta, \sigma)= G\left(\frac{x-\theta}{\sigma}\right), \text{with $G(\cdot)$ being symmetric about 0, }  x, \theta\in \mathbb{R}, \sigma>0\right\}. $$ Let $X\sim F(\cdot; \theta, \sigma_1)$ and $Y\sim F(\cdot; \theta, \sigma_2)$ with $F\in\mathcal{F}$. Then, $X$ is Pitman closer to $\theta$ than $Y$ whenever $\sigma_1< \sigma_2$. 
\end{lemma}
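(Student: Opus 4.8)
The plan is to deduce the result from the peakedness comparison of Lemma \ref{lem-peaked} (equivalently, from the sufficient condition in Lemma \ref{suff-condition}), since a change of scale in a symmetric family is exactly the situation where the two supports are nested and one cdf dominates the other on the positive half-line. First I would reduce to the canonical case $\theta=0$, so that $X$ has cdf $F_X(x)=G(x/\sigma_1)$ and $Y$ has cdf $G_Y(y)=G(y/\sigma_2)$, both symmetric about $0$ because $G$ is. If $G$ is supported on $(-c,c)$ with $0<c\le\infty$, then $S_X=(-\sigma_1 c,\sigma_1 c)$ and $S_Y=(-\sigma_2 c,\sigma_2 c)$, so $\sigma_1<\sigma_2$ gives $S_X\subseteq S_Y$ immediately (both equal to $\mathbb{R}$ when $c=\infty$).

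Next I would verify the key monotonicity: for every $t\ge 0$, since $G$ is nondecreasing and $t/\sigma_1\ge t/\sigma_2$, we have $F_X(t)=G(t/\sigma_1)\ge G(t/\sigma_2)=G_Y(t)$. This is precisely the hypothesis $\P_{\theta}(X-\theta\le t)\ge\P_{\theta}(Y-\theta\le t)$ required in Lemma \ref{suff-condition}. Using symmetry, it also says $\P_0(|X|\le t)=2G(t/\sigma_1)-1\ge 2G(t/\sigma_2)-1=\P_0(|Y|\le t)$, i.e. $X$ is more peaked about $\theta$ than $Y$ in the sense of Definition \ref{def-peak}. Applying Lemma \ref{lem-peaked} (whose support hypothesis $S_X\subseteq S_Y$ we checked above), or directly Lemma \ref{suff-condition}, then yields that $X$ is Pitman closer to $\theta$ than $Y$, which is the assertion.

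Because $G$ is a genuine nondecreasing cdf, the monotonicity step is immediate and there is essentially no analytic obstacle; the only point needing a word of care is the support bookkeeping, namely ensuring $S_X\subseteq S_Y$ so that the hypotheses of Lemma \ref{lem-peaked} are met, which is automatic here since scaling by the smaller $\sigma_1$ shrinks the support. Should one prefer to avoid invoking peakedness, the same two facts $F_X(t)\ge G_Y(t)$ for $t\ge0$ and $a=\sigma_1 c\le\sigma_2 c=b$ feed directly into part $(i)$ of Lemma \ref{lem:pitman-SS}, and the chain of inequalities in the proof of Lemma \ref{suff-condition} gives $\P_0(|X|<|Y|)\ge 2(G_Y(a)-1)^2+\tfrac12\ge\tfrac12$ verbatim, completing the argument.
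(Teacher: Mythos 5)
Your proof is correct, and it rests on the same key fact as the paper's own one-line proof: monotonicity of $G$ gives $F_X(t)=G(t/\sigma_1)\geq G(t/\sigma_2)=G_Y(t)$ for all $t\geq 0$ once $\theta$ is taken as $0$. The only difference is which piece of the paper's earlier machinery you invoke to convert this dominance into Pitman closeness: the paper appeals directly to Part (ii) of Lemma \ref{lem:pitman-SS}, checking in effect that $\int_0^b F_X(t)g_Y(t)\,dt\geq\int_0^b G_Y(t)g_Y(t)\,dt=\tfrac38$, whereas you route through Lemma \ref{suff-condition} (equivalently, the peakedness formulation of Lemma \ref{lem-peaked}), which packages exactly that chain of inequalities. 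If anything, your routing is the more careful one. Part (ii) of Lemma \ref{lem:pitman-SS} is stated for the case $a\geq b$, but when $G$ has bounded support $(-c,c)$ with $c<\infty$, scaling gives $a=\sigma_1 c<\sigma_2 c=b$, so the paper's citation literally applies only in the full-support case (or after noting that the condition in Part (i) reduces to the same $\tfrac38$ inequality when one sets $F_X\equiv 1$ beyond $a$). Lemma \ref{suff-condition}, by contrast, is stated precisely for $a\leq b$, which is the configuration that actually arises here, and your explicit support bookkeeping ($S_X\subseteq S_Y$ because $\sigma_1<\sigma_2$) makes the hypotheses match exactly. So both proofs are the same computation at heart, but yours fits the cited lemma's hypotheses in all cases, which the paper's terse proof does not quite do for bounded supports.
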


\begin{proof}
The result follows immediately from Part $(ii)$ of Lemma \ref{lem:pitman-SS} upon using the fact that   $ G(\frac{y}{\sigma_1}) \geq G(\frac{y}{\sigma_2}) $ for all $\sigma_1<\sigma_2$ and $y>0$ when $\theta=0$.
\end{proof}

%

\begin{example}\label{normal-pit}
Let $X_1, \ldots, X_n$ be an i.i.d.\ sample from $N(\theta,  \sigma^2)$ distribution, and $Y_1, \ldots, Y_n$ be another i.i.d.\ sample from    $N(\theta, k \sigma^2)$ with  $k> 0$. Then:
\begin{itemize}
\item[(i)] $\sum_{i=1}^n a_iX_i$ is Pitman closer  to $\theta$ than $\sum_{i=1}^n b_i Y_i$ whenever $ k> \frac{\sum_{i=1}^n a_i^2} { \sum_{i=1}^n b_i^2}$, where $a_i, b_i>0$ with $\sum_{i=1}^na_i= \sum_{i=1}^n b_i=1$;

\item[(ii)] $\bar{X}$ is Pitman closer to $\theta$ than $\bar{Y}$ whenever $k>1$.  
\end{itemize}
\end{example}

In what follows, we establish Pitman closeness results in the setting of Part $(ii)$ of Lemma \ref{lem:pitman-SS} when the two random variables have the same support, i.e.,  $S_X=S_Y$, and by relaxing some of the assumptions on the distributions. 

\begin{corollary}\label{cor:pitman-SS1}
 Suppose $X$ and $Y$ are independent continuous random variables with location parameter $\theta$ and the same support. Moreover, let $Y$ be symmetrically distributed about $\theta$ and that $\P_{\theta}(X<\theta-t) \leq \P_{\theta}(X>\theta+t)$ for all $t >0$. Then, $X$ is Pitman closer to $\theta$ than $Y$ if 
 
 (i)   $\P_{\theta}(X-\theta \leq t)\geq \P_{\theta}(Y-\theta\leq t)$ for all $t\geq 0$, or
 
 (ii) $\P_{\theta}(X-\theta \leq t)\leq \P_{\theta}(Y-\theta\leq t)$ for all $t\leq 0$. 
\end{corollary}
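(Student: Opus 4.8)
The plan is to reduce both parts to a single integral inequality for the \emph{symmetrized} distribution of $X$, and then feed the two one-sided hypotheses in separately. Throughout I take $\theta=0$ without loss of generality and keep the notation of Lemma~\ref{lem:pitman-SS}, writing $S_X=S_Y=(-a,a)$. The first observation is that the opening computation in the proof of Lemma~\ref{lem:pitman-SS} used only the symmetry of $Y$ to fold the two regions together, so it survives verbatim even though $X$ need not be symmetric, giving
$$\P_0(|X|<|Y|)=2\int_0^a \{F_X(t)-F_X(-t)\}\,g_Y(t)\,dt.$$
Introducing the symmetrized cdf $\bar F_X(t)=\tfrac12\{F_X(t)+1-F_X(-t)\}$, one has $F_X(t)-F_X(-t)=2\bar F_X(t)-1$, whence $\P_0(|X|<|Y|)=4\int_0^a \bar F_X(t)g_Y(t)\,dt-1$. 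Since $\int_0^a G_Y(t)g_Y(t)\,dt=\tfrac12\{G_Y^2(a)-G_Y^2(0)\}=\tfrac38$, the target $\P_0(|X|<|Y|)\ge\tfrac12$ is equivalent to $\int_0^a \{\bar F_X(t)-G_Y(t)\}\,g_Y(t)\,dt\ge 0$, for which the pointwise bound $\bar F_X(t)\ge G_Y(t)$ on $[0,a)$ is a clean sufficient condition.

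The second step is to read the near-symmetry hypothesis as $F_X(t)+F_X(-t)\le 1$ for $t>0$, i.e.\ $1-F_X(-t)\ge F_X(t)$. Averaging this with the identity $\bar F_X(t)=\tfrac12\{F_X(t)+(1-F_X(-t))\}$ yields $\bar F_X(t)\ge F_X(t)$ for $t\ge 0$ (and dually $\bar F_X(t)\le 1-F_X(-t)$). Part~$(i)$ then falls out at once: its hypothesis $F_X(t)\ge G_Y(t)$ on $t\ge 0$ chains with $\bar F_X(t)\ge F_X(t)$ to give $\bar F_X(t)\ge G_Y(t)$ pointwise on $[0,a)$, so the displayed integral is nonnegative. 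Equivalently, this says $X$ is more peaked about $0$ than $Y$, so one may invoke Lemma~\ref{lem-peaked} directly.

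Part~$(ii)$ is where I expect the genuine difficulty, and I would treat it as the crux of the argument. Its hypothesis $F_X(t)\le G_Y(t)$ lives on $t\le 0$, i.e.\ it controls the \emph{left} tail of $X$; rewriting it as $F_X(-t)\le G_Y(-t)=1-G_Y(t)$ for $t\ge0$, what I would want is to promote this to $\bar F_X(-t)\le G_Y(-t)$, which is exactly the pointwise condition $\bar F_X(t)\ge G_Y(t)$ needed above. The obstacle is that the near-symmetry bound points the wrong way for this transfer: on the left tail it gives $\bar F_X(-t)\ge F_X(-t)$, so the control on $F_X(-t)$ does \emph{not} pass cleanly to $\bar F_X(-t)$, and one cannot simply mirror the Part~$(i)$ chain. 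My plan would be to run the mirror-image argument while tracking the orientation of the near-symmetry relation between $F_X(t)$ and $F_X(-t)$ with great care, since it is precisely the compatibility of that orientation with the one-sided hypothesis in $(ii)$ on which the whole conclusion hinges; I would moreover stress-test the resulting pointwise (or averaged) inequality against the benchmark $Y\sim U(-a,a)$, because that is the configuration in which the symmetrization heuristic is most fragile and where a mismatch between the assumed tail orientation and the side on which $(ii)$ is imposed would first surface.
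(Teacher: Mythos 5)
Your part (i) is correct and is essentially the paper's own proof in different packaging: the paper also folds $\P_0(|X|<|Y|)=2\int_0^\infty\{F_X(t)-F_X(-t)\}\,g_Y(t)\,dt$ using only the symmetry of $Y$, then bounds $F_X(t)-F_X(-t)\ge 2F_X(t)-1$ via the near-symmetry hypothesis and applies $F_X(t)\ge G_Y(t)$ to reach $4\int_0^\infty G_Y(t)g_Y(t)\,dt-1=\tfrac12$; your symmetrized cdf $\bar F_X$ simply bundles those two steps into the chain $\bar F_X\ge F_X\ge G_Y$. (Two small caveats: your aside that Lemma~\ref{lem-peaked} could be invoked directly is not literal, since that lemma assumes $X$ symmetric, so it is your integral argument that actually does the work; and the paper's prose cites the inequality ``$F_X(-t)\ge 1-F_X(t)$'', a typo for $\le$, which is what its displayed computation and your argument both use.)

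For part (ii) you offer only a plan, so as a proof the proposal is incomplete there; but the obstruction you identified is genuine and cannot be circumvented, because part (ii) is false under the stated orientation of the near-symmetry hypothesis. The stress test you propose with uniform $Y$ already produces a counterexample: let $Y\sim U(-1,1)$ and give $X$ the density $f_X(x)=\tfrac{\epsilon}{2}+(1-\epsilon)\phi(x)$ on $(-1,1)$, where $\phi$ is a smooth bump supported in $(1-3\eta,\,1-\eta)$. Then $\P_0(X<-t)=\tfrac{\epsilon}{2}(1-t)\le\P_0(X>t)$ for all $t>0$, and $F_X(t)=\epsilon\,G_Y(t)\le G_Y(t)$ for all $t\le 0$, so every hypothesis of (ii) holds; yet $|X|>1-3\eta$ with probability $1-\epsilon$, whence $\P_0(|X|<|Y|)\le 3\eta(1-\epsilon)+\epsilon<\tfrac12$ for small $\epsilon,\eta$. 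Part (ii) becomes true precisely when the auxiliary condition is reversed to $\P_\theta(X<\theta-t)\ge\P_\theta(X>\theta+t)$ for $t>0$: then $F_X(t)-F_X(-t)\ge 1-2F_X(-t)\ge 2G_Y(t)-1$, the second inequality coming from $F_X(-t)\le G_Y(-t)=1-G_Y(t)$, and integrating against $2g_Y(t)$ over $(0,\infty)$ gives exactly $\tfrac12$. So hypotheses (i) and (ii) each require opposite orientations of the near-symmetry condition, and the paper's own one-line claim that ``the result in (ii) can be obtained similarly'' conceals exactly the sign problem you uncovered: under the orientation actually stated in the corollary, only part (i) is valid.
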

\begin{proof}  To show $(i)$, we have 
\begin{eqnarray*}
\P_{\theta} (|X-\theta| < |Y-\theta|)&=& \P_0(|X| <|Y|)\\&=&  2 \int_{0}^{\infty}\{F_X(y)- F_X(-y) \} g_Y(y) dy\\&\geq&  2 \int_{0}^{\infty} \{  F_X(y)-(1-F_X(y)) \}g_Y(y) dy\\&=& 4\int_{0}^{\infty} F_X(y) g_Y(y) dy -1\\&\geq& 4\int_{0}^{\infty} G_Y(y) g_Y(y) dy -1\\&=& \frac 12,
\end{eqnarray*}
wherein the first  and the second inequalities  follow from the fact that $F_X(-t)\geq 1-F_X(t)$ and  $F_X(t) \geq G_Y(t)$ for $t\geq 0$, respectively. The result in $(ii)$ can be obtained similarly. 
\end{proof}

We now establish some Pitman closeness results about randomized estimators.

\begin{lemma}\label{randomized}
Let $X$, $Y$ and $Z$ be three different estimators of $\theta$. Assume that $X$ and $Y$ are both Pitman closer to $\theta$ than $Z$ and define the randomized estimator $T$ as 
 $$ T=\begin{cases} X, & \text{if~ }W=1, \\ Y, & \text {if~ } W=0, 
 \end{cases}$$
 where $W\sim Bernoulli(\zeta)$, $\zeta\in [0, 1]$, is independent of $X$, $Y$ and $Z$. Then, the randomized estimator $T$ is also Pitman closer to $\theta$ than $Z$.
\end{lemma}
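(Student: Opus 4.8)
The plan is to condition on the value of the randomization variable $W$ and exploit its independence from $X$, $Y$ and $Z$. Since $T$ coincides with $X$ on the event $\{W=1\}$ and with $Y$ on the event $\{W=0\}$, and these two events are disjoint and exhaust the sample space, the event $\{|T-\theta|<|Z-\theta|\}$ splits as the disjoint union
$$\left(\{W=1\}\cap\{|X-\theta|<|Z-\theta|\}\right)\cup\left(\{W=0\}\cap\{|Y-\theta|<|Z-\theta|\}\right).$$

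First I would write $\P_{\theta}(|T-\theta|<|Z-\theta|)$ as the sum of the probabilities of these two disjoint events. Because $W\sim Bernoulli(\zeta)$ is independent of the triple $(X,Y,Z)$, each of the joint probabilities factorizes, so that
$$\P_{\theta}(|T-\theta|<|Z-\theta|)=\zeta\,\P_{\theta}(|X-\theta|<|Z-\theta|)+(1-\zeta)\,\P_{\theta}(|Y-\theta|<|Z-\theta|).$$

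Finally I would invoke the hypotheses. By assumption both $X$ and $Y$ are Pitman closer to $\theta$ than $Z$, so each of the two factors $\P_{\theta}(|X-\theta|<|Z-\theta|)$ and $\P_{\theta}(|Y-\theta|<|Z-\theta|)$ is at least $\tfrac12$. The right-hand side above is therefore a convex combination, with weights $\zeta$ and $1-\zeta$, of two quantities each no smaller than $\tfrac12$, and hence is itself at least $\tfrac12$ for every $\zeta\in[0,1]$. This gives $\iP(T,Z\mid\theta,n)\geq\tfrac12$, as required.

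There is no genuine obstacle here; the argument reduces to a one-line conditioning computation. The only point that requires care is the assumed independence of $W$ from $(X,Y,Z)$, which is precisely what licenses the factorization in the second step: without it the cross terms could not be separated and the convexity conclusion would break down. It is worth noting that the conclusion holds irrespective of whether $X$, $Y$ and $Z$ are mutually independent or symmetrically distributed, since none of the earlier structural assumptions of the section are used here.
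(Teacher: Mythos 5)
Your proof is correct and follows essentially the same route as the paper: the paper also conditions on $W$ (writing $T \overset{d}{=} WX+(1-W)Y$ and expanding the indicator as $W\,\I(|X-\theta|<|Z-\theta|)+(1-W)\,\I(|Y-\theta|<|Z-\theta|)$ before taking expectations), which is just your disjoint-event decomposition phrased with indicators, and then applies the same convex-combination bound. No substantive difference.
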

\begin{proof}
We need to show that $\P_{\theta}(|T-\theta| < |Z-\theta|)\geq \frac 12$ for all $\theta \in \Theta$. For this purpose, since $T \overset{d}{=} W X + (1-W) Y$,  we have 
\begin{eqnarray*}
\P_{\theta}(|T-\theta| < |Z-\theta|)&=& \E \left[  \I\left(| W X + (1-W) Y-\theta| < |Z-\theta|\right) \right]\\&=& \E[ W  \I(|X-\theta| < |Z-\theta| ) + (1-W) \I(|Y-\theta| < | Z-\theta|)]\\&=& \zeta \P_{\theta} (|X-\theta| < |Z-\theta| )+ (1-\zeta) \P_{\theta} (|Y-\theta| < | Z-\theta|))\\&\geq& \zeta\left(\frac 12\right) + (1-\zeta)\left(\frac 12\right)= \frac 12, 
\end{eqnarray*}
which completes the proof.
\end{proof}

\begin{remark} \label{rem:rand}
Suppose an estimator $\delta_0$ is Pitman closer to $\theta$ than a competing estimator $\delta$ with probability $\pi_0=\P_{\theta}(| \delta_0-\theta|  < |\delta-\theta|)= \frac 12 + a$, $a\in (0, \frac 12]$. Further, suppose $\delta_1$ is another estimator such that $\delta$ is Pitman closer to $\theta$ than $\delta_1$,  with Pitman closeness probability as $\pi_1=\P_{\theta}(| \delta_1-\theta|  <  |\delta-\theta|)=\frac{1}{2}-b$, $b \in \left(0,\frac{1}{2}\right]$. Now, an interesting question that arises is whether it would be possible to form a randomized estimator using $\delta_0$ and $\delta_1$ such that the new estimator is  Pitman closer to $\theta$ than $\delta$. To answer this question, we consider the randomized estimator $\delta^*\overset{d}{=} W \delta_1 + (1-W) \delta_0$, where $W\sim Bernoulli(\zeta)$  independently of $\delta_0$ and $\delta_1$, and find that   
$$\P_{\theta} (| \delta^*-\theta| <| \delta-\theta|)= \zeta \left(\frac{1}{2}-b\right) + (1-\zeta) \left(a+ \frac 12\right).  $$
Hence, any randomized estimator $\delta^*$, with randomization probability $\zeta \in [0, \frac{a}{a+b}]$, will be Pitman closer to $\theta$ than $\delta$. 
\end{remark}

\section{Pitman Closeness and Symmetric Estimators} \label{rss2}

In this section, we establish some Pitman closeness properties of symmetric estimators. Let $X\sim F_X(\cdot; \theta)$ and  $Y\sim G_Y(\cdot; \theta)$ be two independent and absolutely continuous random variables which are symmetrically distributed about $\theta$ with pdfs $f_X(\cdot; \theta)$ and $g_Y(\cdot; \theta)$, respectively. Let $X$ and $Y$ have the same support,  $X_{1:n}, \ldots, X_{n:n}$ be the order statistics from a random sample of size $n$ from $F_X(\cdot; \theta)$, and $F_{i:n}(\cdot; \theta)$ denote the cdf of $X_{i:n}$, $i=1, \ldots, n$.  Then, it is well-known that (see Arnold et al. (1992) and David and Nagaraja (2003)) 
\begin{eqnarray*} 
F_{i:n}(x;\theta)=\sum_{r=i}^n {{n} \choose {r}} \left[F_X(x;\theta) \right]^{r} \left[1-F_X(x;\theta) \right]^{n-r}. 
\end{eqnarray*}

Now, let 
\begin{eqnarray} \label{pin1}
\pi_{i:n}= \P_{\theta}(|X_{i:n}-\theta| < | Y-\theta|), \quad i=1, \ldots, n, \nonumber
\end{eqnarray}
be the Pitman closeness probability that $X_{i:n}$ is closer to $\theta$ than $Y$. 

\begin{theorem} \label{pin2}

 For $i=1, \ldots, n,$ we have
\begin{eqnarray}\label{pi-final}
  \pi_{i:n}= 2 \int_{\frac 12}^1 g^*(u) \left\{\sum_{r=i}^n \binom{n}{r} u^r (1-u)^{n-r} - \sum_{r=i}^n \binom{n}{r} u^{n-r} (1-u)^{r}  \right\}\, du, \nonumber
\end{eqnarray}
where $g^*(u)= \frac{g_Y(F^{-1}_X(u))}{f_X (F^{-1}_X(u))}$ for $u \in (\frac{1}{2},1]$.

\end{theorem}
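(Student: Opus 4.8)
The plan is to reduce to $\theta=0$ and then rerun the event-decomposition from the proof of Lemma \ref{lem:pitman-SS}, but applied to the order statistic $X_{i:n}$ in place of the symmetric variable $X$. The crucial observation is that the very first line of that computation --- writing $\{|X_{i:n}|<|Y|\}$ as the disjoint union $\{X_{i:n}-Y<0,\,X_{i:n}+Y>0\}\cup\{X_{i:n}-Y>0,\,X_{i:n}+Y<0\}$ and then conditioning on $Y=t$ --- uses only the symmetry of $Y$, not that of the first variable. Since $X_{i:n}$ is in general \emph{not} symmetric about $0$, I cannot invoke part $(ii)$ of Lemma \ref{lem:pitman-SS} directly; instead I would redo this step. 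For $t>0$ the surviving event is $\{-t<X_{i:n}<t\}$ with probability $F_{i:n}(t)-F_{i:n}(-t)$, while for $t<0$ it is $\{t<X_{i:n}<-t\}$; folding the latter half onto $t>0$ using $g_Y(-t)=g_Y(t)$ yields
$$\pi_{i:n}=2\int_{0}^{\infty}\{F_{i:n}(t)-F_{i:n}(-t)\}\,g_Y(t)\,dt,$$
where $F_{i:n}$ denotes the cdf of $X_{i:n}$ at $\theta=0$.

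Next I would substitute the binomial representation $F_{i:n}(t)=\sum_{r=i}^{n}\binom{n}{r}[F_X(t)]^{r}[1-F_X(t)]^{n-r}$ and now invoke the symmetry of $X$ through $F_X(-t)=1-F_X(t)$. This turns $F_{i:n}(-t)$ into $\sum_{r=i}^{n}\binom{n}{r}[1-F_X(t)]^{r}[F_X(t)]^{n-r}$, so that with the abbreviation $u=F_X(t)$ the integrand's bracket becomes exactly $\sum_{r=i}^{n}\binom{n}{r}u^{r}(1-u)^{n-r}-\sum_{r=i}^{n}\binom{n}{r}u^{n-r}(1-u)^{r}$, matching the claimed expression.

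Finally I would carry out the change of variables $u=F_X(t)$. Because $X$ is symmetric with the stated support, $F_X(0)=\tfrac12$ and $F_X$ maps $(0,a)$ onto $(\tfrac12,1)$; the Jacobian gives $dt=du/f_X(F_X^{-1}(u))$ and $g_Y(t)=g_Y(F_X^{-1}(u))$, so the weight $g_Y(t)\,dt$ becomes $g^*(u)\,du$ with $g^*(u)=g_Y(F_X^{-1}(u))/f_X(F_X^{-1}(u))$. Collecting these pieces produces the stated formula.

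I expect the main obstacle to lie in the first step: carefully justifying that the decomposition of Lemma \ref{lem:pitman-SS} can be reused even though $X_{i:n}$ is not symmetric, and keeping precise track of which part of the event survives for $t>0$ versus $t<0$. The remaining work --- inserting the binomial cdf, applying $F_X(-t)=1-F_X(t)$, and performing the substitution --- is routine once that symmetry bookkeeping is handled correctly.
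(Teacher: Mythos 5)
Your proposal is correct and follows essentially the same route as the paper's proof: decompose $\{|X_{i:n}|<|Y|\}$ according to the sign of $Y$, fold the $Y<0$ half onto $Y>0$ via the symmetry of $g_Y$, insert the binomial form of $F_{i:n}$ together with $F_X(-t)=1-F_X(t)$, and substitute $u=F_X(t)$. Your explicit remark that only the symmetry of $Y$ (not of $X_{i:n}$) is needed for the decomposition, so that Lemma \ref{lem:pitman-SS} cannot be cited directly and the computation must be redone, is exactly the point the paper's proof handles implicitly.
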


\begin{proof}
To show the result, taking $\theta$=0 without loss of generality, note that
\begin{eqnarray}\label{pi-i0}
\pi_{i:n}&=& \P_{\theta} (| X_{i:n} -\theta| < |Y-\theta|)\nonumber\\&=& \P_0(|X_{i:n}| < | Y|)\nonumber \\&=& \P_0(Y< X_{i:n}<-Y, Y<0) + \P_0(-Y< X_{i:n}<Y, Y>0).
\end{eqnarray}
We now obtain an analytical expression for $\pi_{i:n}$ by simplifying the two probabilities on the RHS of (\ref{pi-i0}). Firstly, we find
\begin{eqnarray}\label{secondpart}
&& \P_0(-Y< X_{i:n}<Y, Y>0) \nonumber\\ &=& \int_{0}^{\infty} g_Y(t) \{F_{i:n}(t) - F_{i:n}(-t) \}\, dt\nonumber\\&=& \int_{0}^{\infty} g_Y(t) \left\{\sum_{r=i}^n \binom{n}{r} [F_X(t)]^r [1-F_X(t)]^{n-r} - \sum_{r=i}^n \binom{n}{r} [F_X(-t)]^r [1-F_X(-t)]^{n-r} \right\}\, dt \nonumber\\ 
&=& \int_{0}^{\infty} g_Y(t) \left\{\sum_{r=i}^n \binom{n}{r} [F_X(t)]^r [1-F_X(t)]^{n-r} - \sum_{r=i}^n \binom{n}{r} [F_X(t)]^{n-r} [1-F_X(t)]^{r} \right\}\, dt\nonumber \\
&=& \int_{\frac 12}^1 g^*(u) \left\{\sum_{r=i}^n \binom{n}{r} u^r (1-u)^{n-r} - \sum_{r=i}^n \binom{n}{r} u^{n-r} (1-u)^{r}  \right\}\, du, 
\end{eqnarray}
where  the last equality is obtained by setting $u=F_X(t)$, and 
$$g^*(u)= \frac{g_Y(F_X^{-1}(u))}{f_X (F_X^{-1}(u))}.  $$
We similarly find
\begin{eqnarray}\label{firstpart}
 \P_0(Y< X_{i:n}<-Y, Y<0) &=& \int_{-\infty}^{0} g_Y(t) \{F_{i:n}(-t) - F_{i:n}(t) \}\, dt\nonumber\\&=& \int_{0}^{\infty} g_Y(t) \{F_{i:n}(t) - F_{i:n}(-t) \}\, dt\nonumber\\&=&\P_0(-Y<X_{i:n}<Y, Y>0).\end{eqnarray}
 Using \eqref{secondpart} and \eqref{firstpart}, we obtain the required result.
 \end{proof}

 Let $f_{\alpha, \beta}(t)$ denote the pdf of a Beta($\alpha,\beta$) random variable with density
 \begin{eqnarray*}
 f_{\alpha, \beta}(t)=\frac{1}{B(\alpha,\beta)} t^{\alpha-1} (1-t)^{\beta-1},  \quad 0<t<1.
 \end{eqnarray*}
 Then, it is well-known that 
\begin{equation*}
\sum_{r=i}^n {n \choose r} u^r(1-u)^{n-r}=\int_0^u \frac{1}{B(i,n-i+1)}t^{i-1}(1-t)^{n-i}dt=\int_0^u f_{i,n-i+1}(t)dt.
\end{equation*}
 It is then evident that we can write the expression of $\pi_{i:n}$ in Theorem \ref{pin2} as
\begin{equation} \label{pin3}
\pi_{i:n}=2 \int_{\frac 12}^1 g^*(u) \int_{1-u}^u f_{i,n-i+1}(t)dtdu.
\end{equation}

We now present a symmetry property for the Pitman closeness probabilities $\pi_{i:n}$ which is similar in spirit to the symmetry property established in Balakrishnan et al.\ (2009).

\begin{lemma} \label{pi-symmetric}
The Pitman closeness probabilities  $\pi_{i:n}$ possess a symmetry property, viz., that for $i=1, \ldots, n,$
$$\pi_{i:n}= \pi_{n-i+1:n}.$$  
\end{lemma}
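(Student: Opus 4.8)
The plan is to work from the Beta representation established just before the statement, namely
$$\pi_{i:n}=2 \int_{\frac 12}^1 g^*(u) \int_{1-u}^u f_{i,n-i+1}(t)\,dt\,du,$$
and to show that the inner integral $\int_{1-u}^{u} f_{i,n-i+1}(t)\,dt$ is left unchanged when the index $i$ is replaced by $n-i+1$. Since the weight $g^*(u)$ carries no dependence on $i$, this invariance of the inner integral will immediately deliver $\pi_{i:n}=\pi_{n-i+1:n}$.

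The key observation I would use is that the Beta density is invariant under swapping its two parameters combined with the reflection $t\mapsto 1-t$. Concretely, because $B(\alpha,\beta)=B(\beta,\alpha)$, one has $f_{n-i+1,\,i}(t)=f_{i,\,n-i+1}(1-t)$ for all $t\in(0,1)$. Replacing $i$ by $n-i+1$ in \eqref{pin3} turns the first Beta parameter into $n-i+1$ and the second into $n-(n-i+1)+1=i$, so that $f_{i,n-i+1}$ becomes $f_{n-i+1,\,i}$. I would then substitute $s=1-t$ in the resulting inner integral $\int_{1-u}^{u} f_{n-i+1,i}(t)\,dt=\int_{1-u}^{u} f_{i,n-i+1}(1-t)\,dt$. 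The crucial structural feature is that the interval of integration $(1-u,u)$ is symmetric about $\tfrac12$, so the reflection $s=1-t$ maps this interval onto itself; after the sign from $ds=-dt$ interchanges and restores the limits, the integral becomes $\int_{1-u}^{u} f_{i,n-i+1}(s)\,ds$, which is exactly the inner integral for index $i$. The claimed symmetry then follows at once.

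There is no serious obstacle here; the result is a clean symmetry argument once the Beta representation \eqref{pin3} is in hand. The only points requiring care are the bookkeeping of the Beta parameters under $i\mapsto n-i+1$ and the verification that the interchange of the limits $1-u$ and $u$ is exactly compensated by the sign of $ds=-dt$. As an alternative route, one could argue directly from the binomial form of $\pi_{i:n}$ in Theorem \ref{pin2} by reindexing the tail sums $\sum_{r=i}^n$ through $r\mapsto n-r$; this reduces the claim to the telescoping identity $\sum_{r=i}^{n}c_r-\sum_{r=n-i+1}^{n}c_r=\sum_{r=0}^{n-i}c_r-\sum_{r=0}^{i-1}c_r$ with $c_r=\binom{n}{r}u^r(1-u)^{n-r}$, both sides equalling $\sum_{r=i}^{n-i}c_r$. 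That argument is valid but more delicate, since one must track the index ranges carefully when $i>n/2$, so I would favour the Beta-representation proof for its transparency.
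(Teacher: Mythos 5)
Your proof is correct and takes essentially the same approach as the paper: both work from the Beta representation \eqref{pin3} and reduce the claim to the invariance of the inner integral $\int_{1-u}^{u} f_{i,n-i+1}(t)\,dt$ under the swap $(i,n-i+1)\mapsto(n-i+1,i)$. The only difference is that you explicitly justify this invariance via $B(\alpha,\beta)=B(\beta,\alpha)$ and the reflection $s=1-t$ on the interval symmetric about $\tfrac12$, a step the paper asserts without detail.
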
 
\begin{proof} The result follows immediately from (\ref{pin3})  upon using the fact that
$$\int_{1-u}^u f_{i,n-i+1}(t)dt=\int_{1-u}^uf_{n-i+1,i}(t)dt.$$
\end{proof}

\begin{lemma}\label{pi-monotone}
Consider the Pitman closeness probabilities $\pi_{i:n}$ defined in Theorem \ref{pin2}. Then, we have
\begin{itemize}
\item[(i)] for $n=2m-1$, $\pi_{m:n}\geq \pi_{i:n}$ for all $i\neq m$; moreover, $\pi_{i:n}$ is increasing in $i$ for $i\in\{1, \ldots, m\}$ and decreasing in $i$ for $i\in\{m, \ldots, n \}$;

\item[(ii)] for $n=2m$, $\pi_{m:n}=\pi_{m+1:n}> \pi_{i:n}$  for all $i\notin \{m, m+1\}$; moreover, $\pi_{i:n}$ is increasing in $i$ for $i=1, \ldots, m$, and decreasing in $i$ for $i=m+1, \ldots, n$.
\end{itemize}

\end{lemma}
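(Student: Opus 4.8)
The plan is to reduce the monotonicity of $\pi_{i:n}$ in $i$ to a pointwise comparison of the inner integral in the representation \eqref{pin3}. Writing $I_i(u)=\int_{1-u}^{u}f_{i,n-i+1}(t)\,dt$, we have $\pi_{i:n}=2\int_{1/2}^{1}g^*(u)\,I_i(u)\,du$, where $g^*(u)\ge 0$ on $(\tfrac12,1]$. Since the weight $g^*$ is nonnegative, it suffices to determine the sign of $I_{i+1}(u)-I_i(u)$ for each fixed $u\in(\tfrac12,1)$ and then integrate against $g^*$.

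To compute that difference I would invoke the identity already recorded in the excerpt, namely $\int_0^x f_{i,n-i+1}(t)\,dt=\sum_{r=i}^n\binom{n}{r}x^r(1-x)^{n-r}=:T_i(x)$, so that $I_i(u)=T_i(u)-T_i(1-u)$. The one-term telescoping $T_i(x)-T_{i+1}(x)=\binom{n}{i}x^i(1-x)^{n-i}$ (the probability of exactly $i$ successes) then gives, after factoring,
$$I_{i+1}(u)-I_i(u)=\binom{n}{i}\bigl[(1-u)^i u^{n-i}-u^i(1-u)^{n-i}\bigr]=\binom{n}{i}\,[u(1-u)]^i\,\bigl[u^{\,n-2i}-(1-u)^{\,n-2i}\bigr].$$
For $u\in(\tfrac12,1)$ we have $u>1-u>0$, so the bracketed factor is strictly positive when $i<n/2$, vanishes when $i=n/2$, and is strictly negative when $i>n/2$.

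The conclusion follows by integrating against $g^*$. Because $X$ and $Y$ are absolutely continuous with the same support, $g^*(u)>0$ on a subset of $(\tfrac12,1)$ of positive measure, so the strict sign of $I_{i+1}(u)-I_i(u)$ transfers to $\pi_{i+1:n}-\pi_{i:n}$. When $n=2m-1$ the threshold $n/2=m-\tfrac12$ falls strictly between consecutive integers, yielding $\pi_{i+1:n}>\pi_{i:n}$ for $i\le m-1$ and $\pi_{i+1:n}<\pi_{i:n}$ for $i\ge m$; this is part (i), with the symmetry $\pi_{i:n}=\pi_{n-i+1:n}$ of Lemma \ref{pi-symmetric} serving as a consistency check that the peak is centered. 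When $n=2m$ the threshold equals the integer $m$: the difference is positive for $i\le m-1$, vanishes at $i=m$ (so $\pi_{m:n}=\pi_{m+1:n}$), and is negative for $i\ge m+1$, which is part (ii).

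The step I expect to be the crux is the sign analysis of $I_{i+1}(u)-I_i(u)$: the clean factorization $[u(1-u)]^i[u^{\,n-2i}-(1-u)^{\,n-2i}]$ is what exposes the unimodal pattern, and the boundary case $i=n/2$ (possible only for even $n$) must be handled separately to produce the equality $\pi_{m:n}=\pi_{m+1:n}$ rather than a strict inequality. Once this difference is in hand, the remaining arguments are routine.
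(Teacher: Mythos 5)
Your proof is correct and follows essentially the same route as the paper's: both compute the consecutive difference $\pi_{i+1:n}-\pi_{i:n}$ as $2\binom{n}{i}\int_{1/2}^1 g^*(u)\bigl[u^{n-i}(1-u)^i-u^i(1-u)^{n-i}\bigr]\,du$ (the paper directly from the binomial sums of Theorem \ref{pin2}, you via telescoping in the incomplete-beta form \eqref{pin3}, which is the same quantity) and then conclude from the pointwise sign of the bracket on $(\tfrac12,1)$. Your explicit factorization $[u(1-u)]^i\bigl[u^{n-2i}-(1-u)^{n-2i}\bigr]$ and separate treatment of the boundary case $i=n/2$ merely spell out what the paper dismisses as ``easy to show,'' and in fact handle more cleanly the even-$n$ case where the difference vanishes identically, giving $\pi_{m:n}=\pi_{m+1:n}$.
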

\begin{proof}
From Theorem \ref{pin2}, we have 
\begin{eqnarray*}
\pi_{i+1:n}&=& 2 \int_{\frac 12}^1 g^*(u) \left\{\sum_{r=i+1}^n \binom{n}{r} u^r (1-u)^{n-r} - \sum_{r=i+1}^n \binom{n}{r} u^{n-r} (1-u)^{r} \right \}\, du\\&=& 2 \int_{\frac 12}^1 g^*(u) \left\{\sum_{r=i}^n \binom{n}{r} u^r (1-u)^{n-r} - \sum_{r=i}^n \binom{n}{r} u^{n-r} (1-u)^{r} \right. \, du \\ && -\left.   \binom{n}{i} u^i (1-u)^{n-i} + \binom{n}{i} u^{n-i} (1-u)^{i} \right\} du\\&=&
\pi_{i:n} + A_{i:n},
\end{eqnarray*}
where $$A_{i:n}= 2\binom{n}{i}\int_{\frac12}^{1} g^*(u) \{ u^{n-i} (1-u)^{i}- u^i (1-u)^{n-i} \}  du.$$
Now, let us consider the function
$$T_{i, n}(u)=u^{n-i} (1-u)^{i}- u^i (1-u)^{n-i} \quad\text{for $u\in \left[\frac 12, 1\right)$}. $$
It is easy to show that $T_{i, n}(u) >0$ for all $u\in[\frac{1}{2}, 1)$ and $i\leq [\frac{n-1}{2}]$, where $[\cdot]$ is the integer part. Now, since the function $T_{i, n}(u) $ is positive in the interval $[\frac12, 1)$ and the function $g^*(u)$ is obviously positive (being the ratio of two densities), we have that $A_{i:n}$ is positive for those values of $i$. Thus,  for $i=1, \ldots, [\frac{n-1}{2}]$, we have $\pi_{i+1:n} > \pi_{i:n}$. Similarly, for $i\geq [\frac{n-1}{2}]+1$, we see that $T_{i, n}(u)<0$ and so $\pi_{i+1:n} < \pi_{i:n}$. Now,   (a) and (b) follow by setting $n=2m-1$ and $n=2m$, respectively. 
\end{proof}
  
\subsection{Pitman Closeness of a Sample Median from Odd Sample Size}  
  
 Let $X_{m:2m-1}$ be the median in a sample of size $2m-1$ from a distribution symmetric about $\theta$ and let $Y$ be another symmetrically distributed random variable about $\theta$,  independently of $X_{m:2m-1}$.  In the following lemma, we show that the Pitman closeness probability $\pi_{m:2m-1}$ is an increasing function of $m$.
  
 \begin{lemma} \label{monopi}
 The PC probability ${\pi}_{m:2m-1}=\P_{\theta}(|X_{m:2m-1}-\theta|<|Y-\theta|)=\P_0(|X_{m:2m-1}|<|Y|)$ is an increasing function of $m$.
 \end{lemma}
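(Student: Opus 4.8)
The plan is to exploit the representation (\ref{pin3}) of the Pitman closeness probability. For the median of an odd sample we have $i=m$ and $n=2m-1$, so that $n-i+1=m$ and the relevant beta density is $f_{m,m}$. Thus
$$\pi_{m:2m-1}=2\int_{\frac12}^1 g^*(u)\,G_m(u)\,du,\qquad G_m(u):=\int_{1-u}^u f_{m,m}(t)\,dt,$$
so that $G_m(u)=\P(1-u<B_m<u)$ with $B_m\sim\text{Beta}(m,m)$. Since $g^*$ is a ratio of densities and hence nonnegative, it suffices to show that $G_m(u)$ is nondecreasing in $m$ for each fixed $u\in(\frac12,1)$; the monotonicity of $\pi_{m:2m-1}$ then follows at once by integrating the inequality $G_{m+1}(u)\ge G_m(u)$ against $g^*(u)\ge 0$.

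To prove that $G_m(u)$ increases in $m$, I would compare the beta densities $f_{m,m}$ and $f_{m+1,m+1}$ directly. Their ratio is
$$\frac{f_{m+1,m+1}(t)}{f_{m,m}(t)}=\frac{B(m,m)}{B(m+1,m+1)}\,t(1-t),$$
a constant multiple of $t(1-t)$, which is symmetric about $t=\frac12$ and strictly decreasing in $|t-\frac12|$. Hence there is a single threshold $\tau\in(0,\frac12)$ such that the difference $D(t):=f_{m+1,m+1}(t)-f_{m,m}(t)$ is positive on the central band $(\tau,1-\tau)$ and negative on the tails $(0,\tau)\cup(1-\tau,1)$; moreover $D(t)=D(1-t)$ and $\int_0^1 D=0$. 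This is exactly the statement that $\text{Beta}(m+1,m+1)$ is more peaked about $\frac12$ than $\text{Beta}(m,m)$ in the sense of Definition \ref{def-peak}.

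With this symmetric single-crossing structure in hand, I would verify $G_{m+1}(u)-G_m(u)=\int_{1-u}^u D(t)\,dt\ge0$ by splitting into two cases. If $u\le 1-\tau$ then $(1-u,u)\subseteq(\tau,1-\tau)$, where $D\ge0$, so the integral is nonnegative. If instead $u>1-\tau$, then $(1-u,u)\supseteq(\tau,1-\tau)$, and using $\int_0^1 D=0$ together with the symmetry identity $\int_0^{1-u}D=\int_u^1 D$ gives $\int_{1-u}^u D=-2\int_u^1 D(t)\,dt$, which is nonnegative because $(u,1)$ lies in the tail region where $D<0$. In either case $G_{m+1}(u)\ge G_m(u)$, and the conclusion follows.

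The main obstacle is the middle step: establishing the exact crossing pattern of the two beta densities and turning it into the inequality $G_{m+1}\ge G_m$ uniformly in $u$; the remaining steps are routine bookkeeping. The density-ratio computation makes the crossing structure transparent and avoids any appeal to asymptotics, so the peakedness comparison is fully rigorous and self-contained. An alternative route would be to represent $B_m$ as the median of $2m-1$ i.i.d.\ uniforms and argue by a coupling as the sample size grows, but the direct density comparison appears cleaner here.
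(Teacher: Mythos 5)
Your proof is correct, and it shares the paper's starting point --- the representation \eqref{pin3}, the fact that $g^*$ does not depend on $m$, and the reduction to the pointwise inequality $\int_{1-u}^u f_{m+1,m+1}(t)\,dt \geq \int_{1-u}^u f_{m,m}(t)\,dt$ for each $u\in(\tfrac12,1)$ --- but your treatment of that key inequality is genuinely different from the paper's. The paper works with the ratio of unnormalized integrals, $K(u)=\bigl(\int_{1-u}^u t^{m}(1-t)^{m}\,dt\bigr)\big/\bigl(\int_{1-u}^u t^{m-1}(1-t)^{m-1}\,dt\bigr)$, shows by differentiation that $K$ is decreasing on $[\tfrac12,1]$ (the derivative computation reduces to the elementary fact that $t(1-t)>u(1-u)$ for $t\in(1-u,u)$), and concludes $K(u)>K(1)=B(m+1,m+1)/B(m,m)$, which is the desired inequality after normalization. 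You instead compare the normalized densities directly: since $f_{m+1,m+1}/f_{m,m}$ is a constant multiple of $t(1-t)$, the difference $D=f_{m+1,m+1}-f_{m,m}$ is symmetric about $\tfrac12$, integrates to zero, and is positive exactly on a central band $(\tau,1-\tau)$, and your two-case integration then gives $\int_{1-u}^u D\geq 0$. Both arguments ultimately rest on the parabola $t(1-t)$, but yours replaces the quotient-rule computation by a sign-change argument, and it has the bonus of exhibiting the result as a peakedness statement in the sense of Definition \ref{def-peak} --- $\mathrm{Beta}(m+1,m+1)$ is more peaked about $\tfrac12$ than $\mathrm{Beta}(m,m)$ --- which ties the lemma back to Lemma \ref{lem-peaked} of the paper. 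Two minor points to tighten: justify the existence of the threshold $\tau\in(0,\tfrac12)$, either by computing $B(m+1,m+1)/B(m,m)=\tfrac{m}{2(2m+1)}<\tfrac14$ or by noting that otherwise $D\leq 0$ everywhere would contradict $\int_0^1 D=0$ for two distinct densities; and observe that $D>0$ on the open central band makes $G_{m+1}(u)>G_m(u)$ strict for $u\in(\tfrac12,1)$, which combined with $g^*>0$ yields the strict increase of $\pi_{m:2m-1}$ in $m$ that the lemma asserts.
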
 
  
 \begin{proof}
To show the required result, we need to compare ${\pi}_{m:2m-1}$ and ${\pi}_{m+1:2m+1}$ for $m\geq 1$.  To this end, by using the expression in \eqref{pin3}, we have
\begin{eqnarray*}
{\pi}_{m:2m-1}&=& 2 \int_{\frac 12}^1 g^*(u) \int_{1-u}^u \frac{t^{m-1}(1-t)^{m-1}}{B(m,m)}  dt du, 
\end{eqnarray*}
and $\pi_{m+1:2m+1}>\pi_{m:2m-1}$ if  
$$\int_{1-u}^u \frac{t^{m}(1-t)^{m}}{B(m+1, m+1)}  dt > \int_{1-u}^u \frac{t^{m-1}(1-t)^{m-1}}{B(m, m)}  dt,$$
or equivalently
\begin{equation} \label{maineq}\frac{\int_{1-u}^u t^{m}(1-t)^{m}dt}{\int_{1-u}^u t^{m-1}(1-t)^{m-1} dt} >  \frac{B(m+1, m+1)} {B(m, m)}=\frac{\int_{0}^1 t^{m}(1-t)^{m}dt}{\int_{0}^1 t^{m-1}(1-t)^{m-1} dt}.\end{equation}
For this purpose, let us introduce the function
$$K(u)=\frac{\int_{1-u}^u t^{m}(1-t)^{m}dt}{\int_{1-u}^u t^{m-1}(1-t)^{m-1} dt}, \quad \frac 12 \leq u \leq 1  \text{ and } m \geq 1.$$
We observe that \eqref{maineq} is equivalent to  $K(u)>K(1)$. Now, to show  \eqref{maineq},  it suffices to show that $K(u)$ is decreasing in $u \in \left[\frac{1}{2},1\right]$.  By taking the derivative of $K(u)$ with respect to $u$, we find
\begin{eqnarray*}
K'(u)& \propto & 2u^{m}(1-u)^{m} \int_{1-u}^u t^{m-1} (1-t)^{m-1} dt-2u^{m-1}(1-u)^{m-1} \int_{1-u}^u t^{m}(1-t)^{m}dt,
\end{eqnarray*}
and $K'(u)<0$ if
\begin{eqnarray*}
\frac{ \int_{1-u}^u t^{m} (1-t)^{m}dt}{ \int_{1-u}^u t^{m-1}(1-t)^{m-1}dt}>u(1-u).
\end{eqnarray*}
Observe that $\int_{1-u}^u t^{m} (1-t)^{m}dt=\int_{1-u}^{1/2} t^{m-1} (1-t)^{m-1} t(1-t)dt+\int_{1/2}^u t^{m-1} (1-t)^{m-1} t(1-t)dt$ and $t(1-t)>u(1-u)$ since $t(1-t)$ is increasing in $(1-u, \frac 12)$ and decreasing in $(\frac 12, u)$, and so
\begin{eqnarray}
\int_{1-u}^u t^{m} (1-t)^{m}dt&>&u(1-u) \int_{1-u}^{1/2} t^{m-1} (1-t)^{m-1}dt+u(1-u)\int_{1/2}^u t^{m-1} (1-t)^{m-1}dt \nonumber \\
&=&u(1-u) \int_{1-u}^u t^{m-1}(1-t)^{m-1} dt,
\end{eqnarray}
which completes the proof.
\end{proof}

  \begin{remark}
  Of special interest is the case of two competing sample medians. In Theorem \ref{pin2}, upon considering $X_{m':2m'-1}$ as the median of  an independent sample of size $2m'-1$ and taking $\theta$=0 without loss of generality, the Pitman closeness probability of interest in this case is
  $\P_{\theta}(|X_{m:2m-1}-\theta|<|X_{m':2m'-1}-\theta|).$
Using the fact that this probability  is increasing in $m$ for fixed $m' \geq 1$,  we have
  $$\P_{\theta}(|X_{m:2m-1}-\theta|<|X_{m':2m'-1}-\theta|)\left\{\begin{array}{rcl}
  <&\frac 12 & \mbox{\ for\ }  m<m'\\
  =& \frac 12 &\mbox{\ for\ }  m=m'\\
  >& \frac 12&\mbox{\ for\ } m>m',\\
  \end{array}
  \right.
 $$
as should be expected.
 \end{remark}

\section{Efficient RSS for Symmetric Populations  Based on Pitman  Closeness} \label{ss1}

The results in Lemmas \ref{pi-symmetric} and \ref{pi-monotone} lead us to efficient sampling designs for estimating the median in symmetric populations about the same $\theta$ based on RSS. For a review on the concept of RSS, we refer the reader to Chen et al. (2004). We treat the cases of odd and even sample sizes separately and for each case,  we propose an efficient RSS based on Pitman closeness for the estimation of the population median.

\subsection*{(i) $n$ odd, say $n=2m-1$, $m\in\mathbb{N}$}

 In this case, using the established results, a suitable sampling scheme would be the {\em median ranked set sampling}. Under this sampling method, we take $n=2m-1$ independent  simple random samples each of size $n$ from the population and in each sample we measure only the sample median. This sampling scheme results in a median ranked set sample of the form
 $$\{ X_{(m:2m-1)i}, i=1, \ldots, 2m-1\}. $$ 
Now, since the distribution of the sample median $X_{(m:2m-1)i}$ is symmetric about $\theta$, upon using Lemmas \ref{pi-symmetric} and \ref{pi-monotone}, we can conclude that within the class of all symmetrically distributed (about $\theta$) estimators of $\theta$, the median of the median ranked set sample,
$$\delta_{M}(X)= Median \{ X_{(m:2m-1)i}, i=1, \ldots, 2m-1\} $$
 would be the Pitman closest estimator of $\theta$.

\subsection*{ (ii) $n$ even, say $n=2m$, $m\in\mathbb{N}$}
 
 In this case, we propose the use of a {\em randomized median ranked set sampling}. Under this sampling method, we take $n=2m$ independent simple random samples each of size $n$ from the population and in each sample we measure the randomized median defined by
 $$ X^*_i=\begin{cases} X_{(m:2m)i},& \text{if }W_i=1, \\ X_{(m+1:2m)i},& \text {if } W_i=0, 
 \end{cases}$$ 
 where $W_i$ is an independent $Bernoulli(\frac{1}{2})$ variable. Thus, in the $i^{th}$ sample, we nominate $X_{m:2m}$ with probability $\frac12$ and  $X_{m+1:2m}$ with probability $\frac12$. This results in a randomized median ranked set sample of the form 
 $$\{ X^*_i, i=1, \ldots, 2m\},$$
 where $X^*_i \overset{d}{=} W_i X_{(m:2m)i} + (1-W_i) X_{(m+1:2m)i}$.   Here again, it is easy to show that the distribution of $X^*_i$
 is  symmetric about $\theta$ when the parent  distribution $F_X(x; \theta)$ is symmetric about $\theta$.  Now, upon using Lemmas \ref{pi-symmetric} and \ref{pi-monotone}, the randomized estimator
 $$\delta^*_{M}(X) =\begin{cases} X^*_{m:2m}& \text{if }W=1 \\ X^*_{m+1:2m} & \text {if } W=0
 \end{cases},$$
 where $X^*_{i:2m}$ are the order statistics among  $X^*_1, X^*_2,\ldots,X^*_{2m}$ and $W$ is another independent $Bernoulli(\frac{1}{2})$ variable,   would be the Pitman closest estimator of $\theta$ within the class of all symmetrically distributed estimators of $\theta$.

Note that in the above situation, the randomized estimators $X^*_i$, $i=1, \ldots, 2m$,  are all Pitman closer to $\theta$ than $X$. To see this, one can use the result in Example \ref{beta-generated} and argue that the density $f_{X^*_i}(x; \theta)$ can be expressed as 
\begin{eqnarray*}
f_{X^*_i}(x; \theta)&=& \frac{(2m)!}{2 m! (m-1)!} [F_X(x; \theta)]^{m-1} [1-F_X(x; \theta)]^{m-1} \{ 1- F_X(x; \theta) + F_X(x; \theta)\}\\&=& \frac{\Gamma(2m)}{\Gamma(m)\, \Gamma(m)} [F_X(x; \theta)]^{m-1} [1-F_X(x; \theta)]^{m-1} 
\\&=& f_{X_{\alpha}}(x; \theta)\quad\text{with } \alpha=m-1\geq 0.
\end{eqnarray*}
It is also worth mentioning that $X^*_i$ will be Pitman closer to $\theta$ than $X$ even if we do not restrict to the case when $W_i\sim Bernoulli(\frac 12)$.  
Using Remark \ref{rem:rand}, we can extend this  observation to a more general case when $W_i\sim Bernoulli(\zeta)$, $\zeta\in[0,1]$, although in this case the distribution of $X^*_i$ is not symmetric about $\theta$ unless $\zeta=\frac 12. $

\section{Concluding Remarks} \label{conc}
  
Recently, the concept of Pitman closeness   has been discussed extensively in the context of ordered data. In this paper, we have established various Pitman closeness results in the case when the underlying estimators are symmetrically distributed. First, we have studied the Pitman closeness of two independent symmetrically distributed estimators about the same median $\theta$ in the cases when the supports are the same and when they are different. After proving some results,  we have demonstrated their usefulness with a number of examples. Next, we have established a specific result for the  Pitman closeness probability between an individual order statistic and an independent symmetric estimator to the population median. Finally, we have discussed the use of Pitman closeness  probabilities in the determination of an optimal ranked set sampling  scheme for the estimation of the population median. In this case, we have specifically shown that the best scheme in the sense of Pitman closeness is the median ranked set sampling or the randomized median ranked set sampling depending on whether the sample size is odd or even, respectively.

  \section*{Acknowledgements}
The authors thank the Natural Sciences and  Engineering Research Council of Canada for funding their research. The authors would also like to thank Drs.\ Alexandre Leblanc and Brad Johnson for providing comments on an earlier version of this manuscript.

  \end{document}